\theoremstyle{plain}
\newtheorem*{thm*}{Theorem}
\newtheorem{thm}{Theorem}[section]
\crefname{thm}{Theorem}{Theorems}
\Crefname{thm}{Theorem}{Theorems}
\newtheorem*{lem*}{Lemma}
\newtheorem{lem}[thm]{Lemma}
\crefname{lem}{Lemma}{Lemmas}
\Crefname{lem}{Lemma}{Lemmas}
\newtheorem*{claim*}{Claim}
\newtheorem{claim}[thm]{Claim}
\crefname{claim}{Claim}{Claims}
\Crefname{claim}{Claim}{Claims}
\crefname{prop}{Proposition}{Propositions}
\Crefname{prop}{Proposition}{Propositions}
\newtheorem{cor}[thm]{Corollary}
\crefname{cor}{Corollary}{Corollaries}
\Crefname{cor}{Corollary}{Corollaries}
\crefname{conj}{Conjecture}{Conjectures}
\Crefname{conj}{Conjecture}{Conjectures}
\crefname{qn}{Question}{Questions}
\Crefname{qn}{Question}{Questions}
\crefname{obs}{Observation}{Observations}
\Crefname{obs}{Observation}{Observations}
\crefname{ex}{Example}{Examples}
\Crefname{ex}{Example}{Examples}
\newtheorem{rem}[thm]{Remark}
\crefname{rem}{Remark}{Remarks}
\Crefname{rem}{Remark}{Remarks}
\theoremstyle{definition}
\crefname{prob}{Problem}{Problems}
\Crefname{prob}{Problem}{Problems}
\newtheorem{defn}[thm]{Definition}
\crefname{defn}{Definition}{Definitions}
\Crefname{defn}{Definition}{Definitions}
\theoremstyle{remark}
\xpatchcmd{\proof}{\itshape}{\normalfont\proofnamefont}{}{}
\newcommand{\proofnamefont}{}
\renewcommand{\proofnamefont}{\bfseries}
\newcommand{\remove}[1]{}
\newcommand{\ceil}[1]{
    \left\lceil #1 \right\rceil
}
\newcommand{\floor}[1]{
    \left\lfloor #1 \right\rfloor
}
\newcommand{\comp}[1]{\overline{#1}}
\newcommand{\eps}{\varepsilon}
\newcommand{\G}{\mathcal{G}}
\DeclareMathOperator{\cm}{CM}
\newcommand{\even}[1]{\langle\langle #1 \rangle\rangle}
\newcommand{\odd}[1]{\langle #1 \rangle}
\renewcommand{\Pr}{\mathbb{P}}
\newcommand{\Ex}{\mathbb{E}}
\title{An improvement on {\L}uczak's connected matchings method}
\author{
	    Shoham Letzter\thanks{
		Department of Mathematics, 
		University College London, 
		Gower Street, London WC1E~6BT, UK. 
		Email: \texttt{s.letzter}@\texttt{ucl.ac.uk}. 
		Research supported by the Royal Society.
    }
}
\begin{document}

\date{}
\maketitle

\begin{abstract}

	\setlength{\parskip}{\medskipamount}
    \setlength{\parindent}{0pt}
    \noindent

	A \emph{connected matching} in a graph $G$ is a matching contained in a connected component of $G$.
	A well-known method due to {\L}uczak reduces problems about monochromatic paths and cycles in complete graphs to problems about monochromatic connected matchings in almost complete graphs.
	We show that these can be further reduced to problems about monochromatic connected matchings in complete graphs. 

	We illustrate the potential of this new reduction by showing how it can be used to determine the $3$-colour Ramsey number of long paths, using a simpler argument than the original one by Gy\'arf\'as, Ruszink\'o, S\'ark\"ozy, and Szemer\'edi (2007).

\end{abstract}

\section{Introduction} \label{sec:intro}

	The \emph{$k$-colour Ramsey number} of a graph $H$, denoted $r_k(H)$, is the minimum $N$ such that every $k$-edge-colouring of $K_N$ contains a monochromatic copy of $H$. 
	The study of Ramsey-type problems for paths and cycles was initiated by Gerencs\'er and Gy\'arf\'as \cite{gerencser-gyarfas} in an early paper (1967) in which they determined the $2$-colour Ramsey number of a path, showing that $r_2(P_n) = \floor{\frac{3n-2}{2}}$ for $n \ge 2$. Quite a few results in the area, mostly about two colours, were proved in the following few years (see, e.g., \cite{faudree-schelp,faudree-schelp2,rosta,gyarfas-lehel}).
	A while later, in 1999, {\L}uczak \cite{luczak} determined, asymptotically, the $3$-colour Ramsey number of odds cycles. Since then, the $3$-colour Ramsey number of $P_n$ and $C_n$ has been determined precisely for every large $n$ (see \cite{figaj-luczak,kohayakawa-simonovits-skokan,gyarfas-et-al,benevides-skokan}). For $k \ge 4$, the $k$-colour Ramsey number of a path is still unknown; the best known bounds to date are $(k-1)n + O(1) \le r_k(P_n) \le (k-1/2)n + o(n)$. For the lower bound, see Yongqi, Yuansheng, Feng and Bingxi \cite{yongqi-et-al}; an alternative construction can be obtained from an affine plane of order $k-1$ whenever such a plane exists. For the upper bound see Knierim and Su \cite{knierim-su}.
	The same bounds also hold for $r_k(C_n)$ whenever $n$ is even, and are the state of the art in this case too. Remarkably, for long odd cycles the $k$-colour Ramsey number is known precisely: it is $r_k(C_n) = 2^{k-1}(n-1) + 1$ for sufficiently large $n$ (Jenssen and Skokan \cite{jenssen-skokan}). Interestingly, this bound does not holds for all $k$ and $n$ (Day and Johnson \cite{day-johnson}).

	A \emph{connected matching} is a matching contained in a connected component. A \emph{monochromatic connected matching} is a matching contained in a monochromatic component; similarly, an $\ell$-coloured connected matching is a matching contained in a component of colour $\ell$. 
	A key method in the study of Ramsey-type problems about paths and cycles, introduced by {\L}uczak \cite{luczak}, is the use of connected matchings in conjunction with Szemer\'edi's regularity lemma \cite{szemeredi}. {\L}uczak noticed that by applying the regularity lemma, problems about monochromatic paths and cycles in complete graphs can be reduced to problems about monochromatic connected matchings in almost complete graphs. To illustrate this, here is a special case of a lemma by Figaj and {\L}uczak \cite{figaj-luczak} which formalises this reduction.

	\begin{lem}[Special case of Lemma 3 in \cite{figaj-luczak}] \label{lem:figaj-luczak}
		Let $\alpha > 0$ and let $k$ be an integer.
		Suppose that for every $\eps > 0$ there exists $\delta > 0$ such that for sufficiently large $n$ the following holds: every $k$-colouring of every graph $G$ on at least $(1+\eps)\alpha n$ vertices and with density at least $1 - \delta$ contains a monochromatic connected matching on at least $n$ vertices. Then $r_k(C_n) \le (\alpha+o(1))n$ for every \emph{even} n (and so $r_k(P_n) \le (\alpha + o(1))n$).
	\end{lem}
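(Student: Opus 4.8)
The plan is to run {\L}uczak's reduction, applying Szemer\'edi's regularity lemma to the given colouring and feeding the resulting reduced graph into the hypothesis. Since ``$r_k(C_n)\le(\alpha+o(1))n$ for even $n$'' means precisely that for every $\eta>0$ one has $r_k(C_n)\le(\alpha+\eta)n$ for all large even $n$, I fix $\eta>0$ and choose constants in the order $\eta\gg\eps\gg\delta\gg\eps'\gg 1/n$: pick $\eps>0$ small compared with $\eta/\alpha$; let $\delta>0$ be the constant the hypothesis provides for this $\eps$, and $n_0$ the corresponding threshold; pick a regularity parameter $\eps'>0$ small compared with $\delta/k$, $1/k$, $\eps$ and $\eta$; and take $n$ large. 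Fix an arbitrary $k$-edge-colouring of $K_N$ with $N=\ceil{(\alpha+\eta)n}$.

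First I apply the $k$-colour version of Szemer\'edi's regularity lemma with parameter $\eps'$ (demanding the number of parts to be large enough for the estimates below) to obtain a partition $V(K_N)=V_0\cup V_1\cup\dots\cup V_m$ with $|V_0|\le\eps' N$, $|V_1|=\dots=|V_m|=L$, and such that all but at most $k\eps'\binom m2$ of the pairs $\{V_i,V_j\}$ are $\eps'$-regular in every colour. I then form the $k$-coloured reduced graph $R$ on vertex set $[m]$: join $i$ to $j$ whenever $\{V_i,V_j\}$ is $\eps'$-regular in all $k$ colours, and colour the edge $ij$ by any colour in which $(V_i,V_j)$ has density at least $1/k$ (one exists by averaging). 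Then $e(R)\ge(1-k\eps')\binom m2\ge(1-\delta)\binom m2$, so $R$ has density at least $1-\delta$; and since $mL=N-|V_0|\ge(1-\eps')N\ge(1-\eps')(\alpha+\eta)n$, the integer $n':=\floor{m/((1+\eps)\alpha)}$ satisfies $m\ge(1+\eps)\alpha n'$, and $n'\ge n_0$ if the number of parts was chosen large enough. Applying the hypothesis to $R$ now produces a monochromatic connected matching on at least $n'$ vertices; fix such a matching $M$, with $|M|\ge n'/2$, contained in a connected component $F$ of the colour-$c$ subgraph of $R$.

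It remains to lift $M$ to a monochromatic $C_n$ in $K_N$, which is the standard closing step of the method. For each edge $ij\in M$ the pair $(V_i,V_j)$ is $\eps'$-regular of colour-$c$ density at least $1/k>2\eps'$, and a standard fact about regular pairs gives, inside this pair, colour-$c$ paths of every length up to $(1-\gamma)\cdot 2L$, where $\gamma=\gamma(\eps',k)\to0$ as $\eps'\to0$. Using the connectedness of $F$, I splice these $|M|$ near-spanning paths together in a cyclic order by short colour-$c$ paths inside $F$, each such connecting path using at most $m$ additional clusters and only a bounded number of vertices of each (chosen greedily from $\eps'$-regularity while avoiding the $O(m^2)$ vertices already committed); the connections therefore cost only $O(m^2)=o(n)$ vertices in total. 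This yields a monochromatic cycle of colour $c$ of every even length up to $(1-\gamma)\cdot 2L|M|-O(m^2)$. Since $2L|M|\ge mL/((1+\eps)\alpha)-L=(N-|V_0|)\bigl(\tfrac1{(1+\eps)\alpha}-\tfrac1m\bigr)$, once $\eps,\eps'$ are small and the number of parts is large this is at least $(1+\tfrac{\eta}{2\alpha})n$ (using $\alpha+\eta>(1+\eps)\alpha$), so the bound exceeds $n$. Hence $K_N$ contains a monochromatic $C_n$, giving $r_k(C_n)\le(\alpha+\eta)n$; as $\eta$ was arbitrary this is $r_k(C_n)\le(\alpha+o(1))n$ for even $n$, and since $P_n\subseteq C_{2\ceil{n/2}}$ we also get $r_k(P_n)\le r_k(C_{2\ceil{n/2}})\le(\alpha+o(1))n$.

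The only genuine content, as opposed to bookkeeping, is in that last step: the facts that an $\eps'$-regular pair of density bounded below contains paths of essentially all lengths, and that these can be stitched through a connected component into a single cycle of \emph{prescribed even length} while wasting a negligible number of vertices. This is exactly the heart of {\L}uczak's connected-matchings method and appears in essentially this form in \cite{figaj-luczak} and subsequent work; restricting to even $n$ is precisely what avoids having to control the odd girth of the monochromatic components of $R$, and is why the odd-cycle case is more delicate.
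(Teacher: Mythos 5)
Your argument is correct in outline and follows exactly the standard {\L}uczak/Figaj--{\L}uczak regularity reduction; note that the paper does not prove this lemma at all but quotes it as a special case of Lemma~3 of \cite{figaj-luczak}, whose proof is precisely the regularity-plus-connected-matching argument you reconstruct (apply the multicolour regularity lemma, feed the dense reduced graph into the hypothesis, lift the monochromatic connected matching to long even cycles). The only loose point is the order of commitments in the splicing step: one should route the short connecting paths (or reserve vertices for them) before filling the matched pairs with long paths of adjustable length, both to avoid congestion in clusters that the near-spanning paths exhaust and to hit the exact even length $n$ --- but this is standard bookkeeping in the cited proof.
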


	The connected matchings method, illustrated in \Cref{lem:figaj-luczak}, is clearly very handy; connected matchings are more convenient to work with than paths, and there are several structural results at one's disposal when studying matchings, such as Hall and K\H{o}nig's theorems (see, e.g., \cite{lovasz-plummer}) for bipartite graphs and Tutte's theorem \cite{tutte} and the Gallai-Edmonds decomposition theorem \cite{gallai,edmonds} (see \Cref{thm:ge} below) for general graphs.
	However, the need to switch from complete graphs to almost complete graphs is a drawback. At the very least, it is a nuisance, making proofs more technical and less readable, and in some cases it can be a genuine obstacle.

	Our main aim in this paper is to provide a further reduction, replacing almost complete graphs as in \Cref{lem:figaj-luczak} by complete graphs. To illustrate this, we obtain the following strengthening of \Cref{lem:figaj-luczak} as a corollary of the more general \Cref{thm:main-general} stated below. 

	\begin{cor} \label{cor:main-complete-symmetric}
		Let $\alpha > 0$ and let $k$ be an integer.
		Suppose that for every $\eps > 0$ and every sufficiently large $n$, every $k$-colouring of $K_{\ceil{(1 + \eps)\alpha n}}$ contains a monochromatic connected matching on at least $n$ vertices. Then $r_k(C_n) \le (\alpha + o(1))n$ for every even $n$.
	\end{cor}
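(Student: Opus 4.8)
The plan is to deduce \Cref{cor:main-complete-symmetric} from \Cref{lem:figaj-luczak}: by that lemma it suffices to show that the hypothesis of \Cref{cor:main-complete-symmetric}, a statement about complete graphs, implies the hypothesis of \Cref{lem:figaj-luczak}, the same statement with $K_{\ceil{(1+\eps)\alpha n}}$ replaced by an arbitrary graph on at least $(1+\eps)\alpha n$ vertices of density at least $1-\delta$. It is in fact cleaner to prove a version of this reduction allowing a separate threshold per colour and then specialise to the symmetric case --- this is what \Cref{thm:main-general} does --- but the essential content is the reduction just stated, which I now describe.

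Fix $\eps>0$, set $\eps'=\eps/3$, and invoke the hypothesis with $\eps'$, so that every $k$-colouring of $K_{\ceil{(1+\eps')\alpha m}}$ has a monochromatic connected matching on at least $m$ vertices for all large $m$. Let $G$ be a $k$-coloured graph on $N\ge(1+\eps)\alpha n$ vertices of density at least $1-\delta$, with $\delta$ small (to be chosen) and $n$ large. First I would pass to a subset of $G$ on about $(1+\eps/3)\alpha n$ vertices that still has density at least $1-O(\delta)$ --- such a subset exists by averaging over uniformly random subsets --- so that henceforth all relevant quantities are $O(n)$. Then I \emph{clean up}: delete the set $B$ of vertices having more than a $\sqrt{\delta}$-fraction of the vertices as non-neighbours; since only an $O(\delta)$-fraction of pairs are non-edges we have $|B|=O(\sqrt{\delta})\,|V|$, so for $\delta$ small the remaining set $U$ has at least $\ceil{(1+\eps')\alpha n}$ vertices (trim to equality), and the graph of missing edges $F:=\comp{G}[U]$ has maximum degree $d=o(n)$. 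Finally, extend the colouring of $G[U]$ to a $k$-colouring of the complete graph on $U$ by colouring each edge of $F$, and apply the hypothesis to get a colour $c$ and a colour-$c$ connected matching $M$ on at least $n$ vertices inside a single colour-$c$ component $D$ of the completed graph.

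The crux --- and the step I expect to be the main obstacle --- is to convert $M$ into a colour-$c$ connected matching of $G[U]$ (hence of $G$) on at least $n$ vertices: $M$ may use many edges of $F$, and the extra connectivity and extra matchings created by adding $F$ need not be redundant, so one cannot just delete the fake edges. The idea is to choose the extension carefully: colour each edge $uv$ of $F$ with a colour in which both $u$ and $v$ already have at least $|U|/2k$ neighbours in $G[U]$ --- possible since $u$ and $v$ each miss only $d$ vertices. Then for every colour $c$ the completed colour-$c$ graph differs from the colour-$c$ graph $H_c$ of $G[U]$ only by the sparse graph $F_c$ of $F$-edges coloured $c$; every endpoint of an $F_c$-edge has $H_c$-degree at least $|U|/2k$, so every component of $H_c$ meeting $F_c$ is large, whence $D$ is obtained by joining at most $2k$ components of $H_c$ along $F_c$-edges. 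From this one wants to deduce --- via the Gallai--Edmonds decomposition (\Cref{thm:ge}) of $H_c$ on $D$, the submultiplicativity $\nu(H_c\cup F_c)\le\nu(H_c)+\nu(F_c)$ of matching numbers, the large-degree property (so the vertices missed by a maximum matching of $H_c$ cannot all be $F_c$-endpoints), and the global sparsity of $F$ --- that $H_c$, restricted to a suitable sub-component of $D$, already contains a connected matching on at least $n$ vertices, up to a loss of $o(n)$ that is absorbed by the slack between $(1+\eps)\alpha n$ and $(1+\eps')\alpha n$. Pushing this matching-theoretic step through with the correct constants, so that the loss is genuinely $o(n)$ and the careful extension is always available, is where I expect essentially all the difficulty to lie; this is precisely what \Cref{thm:main-general} is designed to package. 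With the reduction established, \Cref{lem:figaj-luczak} gives $r_k(C_n)\le(\alpha+o(1))n$ for even $n$, as required.
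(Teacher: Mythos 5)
Your high-level plan---derive the hypothesis of \Cref{lem:figaj-luczak} from the complete-graph hypothesis---is indeed how the paper proceeds, but the direction in which you run the reduction is not, and the step you yourself flag as the crux is a genuine gap, not a technicality that \Cref{thm:main-general} ``packages''. You complete the cleaned graph $G[U]$ to a complete graph, apply the hypothesis there, and then try to pull the resulting monochromatic connected matching back into $G[U]$ with only an $o(n)$ loss. Two concrete problems. First, your extension rule need not exist: for a missing edge $uv$, the vertex $u$ may be incident only to red edges of $G[U]$ and $v$ only to blue ones, so there is no colour in which \emph{both} endpoints have $|U|/2k$ neighbours; having few non-neighbours does not produce a common good colour. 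Second, and more seriously, the pull-back fails in the natural bad scenario and none of the tools you list addresses it. The obstruction is connectivity, not matching number: a single fake edge of colour $c$ may join two components of $H_c$ each carrying a connected matching on about $n/2$ vertices, so the completed graph has a colour-$c$ connected matching on $n$ vertices while no single component of $H_c$ has one on more than $n/2 + O(1)$ vertices---a loss of $\Omega(n)$ which the $\eps$-slack cannot absorb, and the hypothesis does not let you choose the colour it hands you. The bound $\nu(H_c\cup F_c)\le\nu(H_c)+\nu(F_c)$ is also of no help, since the complement of an almost complete graph can have linear matching number (for instance the non-edges may form a perfect matching, in which case your cleaning step deletes nothing), so $\nu(F_c)$ may be $\Theta(n)$.

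Note also that \Cref{thm:matching-general} (and its special case \Cref{thm:matching-complete}) does not prove a pull-back statement for a completion on the same vertex set; it proves the contrapositive with a crucial vertex deletion. Given an almost complete $G$ with no large monochromatic connected matching, the paper takes a maximal multicoloured supergraph $G_1$ with the same property, uses Gallai--Edmonds (\Cref{lem:ge}) and a type argument to show that $\comp{G}_1$ has matching number $O(\delta n)$, and then deletes the vertices of a maximum matching of $\comp{G}_1$, leaving a slightly smaller complete graph with no large monochromatic connected matching. The deletion is essential---by maximality of $G_1$, colouring any remaining non-edge in any colour creates a connected matching above the threshold---and it is precisely what sidesteps the forced-merge problem your forward argument runs into. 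The corollary then follows by contradiction: if some graph as in \Cref{lem:figaj-luczak} had no monochromatic connected matching on $n$ vertices, one would obtain a $k$-coloured complete graph on $\ceil{(1+\eps')\alpha n}$ vertices with none, contradicting the hypothesis. To repair your write-up, run the reduction in this contrapositive direction rather than completing and pulling back.
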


	\subsection{$2$-matchings}

		A \emph{$2$-matching} (sometimes known as a \emph{basic $2$-matching}) is a collection of vertex-disjoint edges and odd cycles; the \emph{order} of a $2$-matching is the number of vertices it covers.\footnote{The notion of a $2$-matching is related to that of a fractional matching. A \emph{fractional matching} in a graph $G$ is a function $\omega : E(G) \to [0,1]$ satisfying $\sum_{u \in N(v)} \omega(uv) \le 1$ for every vertex $v$. The \emph{size} of $\omega$ is $\sum_{e \in E(G)} \omega(e)$, and its \emph{order} is twice its size. A simple exercise shows that in every graph $G$, the maximum order of a $2$-matching equals the maximum order of a fractional matching. It thus often suffices to focus on $2$-matchings instead of fractional matchings.}
		A \emph{connected $2$-matching} is a $2$-matching contained in a connected component, and a \emph{monochromatic connected $2$-matching} in an edge-coloured graph is a $2$-matching contained in a monochromatic component. It is not hard to see that the proof of \Cref{lem:figaj-luczak} can be modified to prove a variant of \Cref{lem:figaj-luczak} where in the assumptions instead of matchings we have $2$-matchings. This is a handy improvement, due to the existence of simpler structural results for $2$-matchings (see Pulleyblank \cite{pulleyblank} or \Cref{thm:pulleyblank} below).
		Indeed, a few recent papers (see \cite{debiasio-lo,gyarfas-sarkozy-matchings-few-colours,korandi-lang-letzter-pokrovskiy}) made use of variants of {\L}uczak's method for $2$-matchings. Another consequence of our main result, \Cref{thm:main-general} stated below, is the following variant of \Cref{cor:main-complete-symmetric} for $2$-matchings.

		\begin{cor} \label{cor:main-complete-symmetric-frac}
			Let $\alpha > 0$ and let $k$ be an integer.
			Suppose that for every $\eps > 0$ and every sufficiently large $n$, every $k$-colouring of $K_{\ceil{(1 + \eps)\alpha n}}$ contains a monochromatic connected $2$-matching of order at least $n$. Then $r_k(C_n) \le (\alpha + o(1))n$ for every even $n$.
		\end{cor}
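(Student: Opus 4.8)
The plan is to obtain \Cref{cor:main-complete-symmetric-frac} from \Cref{thm:main-general} in exactly the same way that \Cref{cor:main-complete-symmetric} is obtained from it, reading ``connected $2$-matching'' wherever one reads ``connected matching''. Two ingredients are needed. The first is the $2$-matching version of \Cref{lem:figaj-luczak} alluded to above, namely: \emph{if for every $\eps > 0$ there is $\delta > 0$ such that for large $n$, every $k$-colouring of every graph on at least $(1 + \eps)\alpha n$ vertices with density at least $1 - \delta$ contains a monochromatic connected $2$-matching of order at least $n$, then $r_k(C_n) \le (\alpha + o(1))n$ for even $n$.} The second is \Cref{thm:main-general} itself, read for $2$-matchings, which transfers the existence of large monochromatic connected $2$-matchings from complete graphs to almost-complete graphs at the cost of a $(1 + o(1))$ factor in the relevant parameters.

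Granting these, fix $\eps > 0$; we must produce $\delta > 0$ witnessing the hypothesis of the $2$-matching version of \Cref{lem:figaj-luczak}, i.e.\ such that for large $n$ every $k$-colouring of every graph on at least $(1 + \eps)\alpha n$ vertices with density at least $1 - \delta$ contains a monochromatic connected $2$-matching of order at least $n$. This is precisely the conclusion that \Cref{thm:main-general} (read for $2$-matchings) draws from the hypothesis of the corollary: the latter provides, for every $\eps' > 0$ and all large $m$, a monochromatic connected $2$-matching of order at least $m$ in every $k$-colouring of $K_{\ceil{(1 + \eps')\alpha m}}$, and \Cref{thm:main-general} carries this over to almost-complete graphs. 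One runs \Cref{thm:main-general} with $\eps'$ chosen as a sufficiently small function of $\eps$ and $\alpha$, so that its $(1 + o(1))$ loss is swallowed by the gap between the $1 + \eps$ demanded in \Cref{lem:figaj-luczak} and the $1 + \eps'$ available in the hypothesis; since the corollary's hypothesis holds for \emph{all} large $m$, it may be invoked at whatever size \Cref{thm:main-general} calls for, and if the latter is phrased via a minimum-degree condition one first passes, by a standard cleaning step, from a density-$(1-\delta)$ graph to a spanning subgraph of minimum degree at least $(1 - \sqrt{\delta})N$ on all but a $\sqrt{\delta}$-fraction of the vertices. This produces the required $\delta$, and the $2$-matching version of \Cref{lem:figaj-luczak} then gives $r_k(C_n) \le (\alpha + o(1))n$ for even $n$.

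Essentially all of the substance lives in \Cref{thm:main-general}, so within the scope of \Cref{cor:main-complete-symmetric-frac} there is no serious obstacle — only two routine points to confirm. First, that \Cref{thm:main-general} applies with ``$2$-matching'' in place of ``matching'': this should hold because its proof uses only structural features shared by matchings and $2$-matchings, with Pulleyblank's theorem (\Cref{thm:pulleyblank}) playing the role of the Gallai--Edmonds theorem (\Cref{thm:ge}); if \Cref{thm:main-general} happens to be stated only for matchings, one checks that nothing in its proof breaks when ``matching'' is replaced by ``$2$-matching'' (if anything, $2$-matchings are easier). Second, that the $2$-matching version of \Cref{lem:figaj-luczak} holds, which is routine: in the regularity reduction a monochromatic connected $2$-matching of the reduced graph blows up to a long monochromatic cycle exactly as a connected matching does, since the blow-up of an odd cycle contains an almost-spanning cycle of its vertex set. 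The most error-prone part of writing this up is simply keeping the several $\eps$'s and $\delta$'s consistent across the chain of applications above.
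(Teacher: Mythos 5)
Your argument is correct and is essentially the paper's: \Cref{cor:main-complete-symmetric-frac} is an immediate specialisation of \Cref{thm:main-general} (take $s=1$, $F$ a single looped vertex, $k_0=k$, $\beta_1=\alpha$, $\alpha_\ell=1$), and \Cref{thm:main-general} is proved by exactly your combination of the $2$-matching form of {\L}uczak's reduction (\Cref{lem:figaj-luczak-general}) with the complete-to-almost-complete transfer together with the low-degree-vertex cleaning step. The only slip is one of labelling: the transfer result you invoke is \Cref{thm:matching-general} (or \Cref{thm:matching-complete} in the symmetric case), not \Cref{thm:main-general} itself, and since both it and \Cref{thm:main-general} are already stated for $2$-matchings, no re-verification of the matching proof is required.
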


	\subsection{Cycles of different lengths}

		The original statement of Lemma 3 in \cite{figaj-luczak} allows for both odd and even cycles and for different paths or cycle lengths for different colours\footnote{However, the lengths all need to be of the same order of magnitude.}. 
		\Cref{cor:main-complete-symmetric-frac} above can be generalised similarly. Following \cite{figaj-luczak}, given a real number $x$ define $\even{x}$ to be the largest even number not larger than $x$ and $\odd{x}$ to be the largest odd number not larger than $x$. Previously stated results show that monochromatic even cycles correspond to connected matchings of the same colour; the next corollary shows that monochromatic odd cycles correspond to connected matchings that are contained in a non-bipartite monochromatic component of the same colour.

		\begin{cor} \label{cor:main-complete-asymmetric}
			Let $k_0 \le k$ be integers and let $\alpha_1, \ldots, \alpha_k > 0$. Suppose that for every $\eps > 0$ and sufficiently large $n$,
			every $k$-colouring of $K_{\ceil{(1+\eps)n}}$ yields an $\ell$-coloured connected $2$-matching $M$ on at least $\alpha_{\ell} n$ vertices for some $\ell \in [k]$, where $M$ is contained in a non-bipartite $\ell$-coloured component if $\ell \in [k_0+1, k]$.
			Then for every $\eps > 0$ and large $n$, every $k$-colouring of $K_{\ceil{(1+\eps)n}}$ contains an $\ell$-coloured cycle of even length $\even{\alpha_{\ell}n}$ if $\ell \in [k_0]$, or odd length $\odd{\alpha_{\ell}n}$ if $\ell \in [k_0+1, k]$.
		\end{cor}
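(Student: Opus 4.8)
The plan is to run {\L}uczak's reduction directly --- this is also the way one specialises \Cref{thm:main-general} --- so I will focus only on the features that go beyond \Cref{cor:main-complete-symmetric-frac}, namely the colour-dependent target lengths and the odd cycles. Fix a $k$-colouring $\chi$ of $K_N$ with $N = \ceil{(1+\eps)n}$, and apply the multicolour regularity lemma to $\chi$ with a regularity parameter $\eps'$ that is tiny compared with $\eps$ and with a large prescribed lower bound on the number of parts, so that $1/n \ll \eps' \ll 1/m \ll \eps$; this gives an equipartition $V_1, \dots, V_m$ with an exceptional set of size at most $\eps' N$. Invoking the improvement at the heart of this paper, we may take the reduced graph on $[m]$ to be \emph{complete}; colour each edge $ij$ by a colour $c(ij)$ for which $(V_i, V_j)$ is $\eps'$-regular and has density at least $1/k$ in colour $c(ij)$. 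Writing $m = \ceil{(1+\eps/2)n'}$, so that $n' = (1-o(1))\tfrac{m}{1+\eps/2}$ is large, the hypothesis applied to this $k$-coloured $K_m$ produces, for some $\ell \in [k]$, an $\ell$-coloured connected $2$-matching $M$ on at least $\alpha_\ell n'$ vertices, lying in an $\ell$-coloured component $D$ that is non-bipartite whenever $\ell \in [k_0+1, k]$.

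It remains to turn $M$ into an $\ell$-coloured cycle of the prescribed length in $K_N$ --- the usual blow-up step. A matching edge $uv$ of $M$ corresponds to an $\eps'$-regular pair $(V_u, V_v)$ of density at least $1/k$ in colour $\ell$, in which one finds, for every $t$ up to $(1-o(1))|V_u|$, an $\ell$-coloured path on $2t$ vertices with one prescribed endpoint in each side; an odd cycle of $M$ is handled similarly. Joining the blown-up pieces of $M$ cyclically by short $\ell$-coloured paths routed through $D$ (bounded-length walks in $D$, themselves blown up), the pieces assemble into a single $\ell$-coloured cycle, and we have enough freedom to make its length any integer of the appropriate parity in an interval whose right endpoint is $(1-o(1))\,|V(M)|\cdot \tfrac{N}{m} \ge (1-o(1))\,\alpha_\ell n' \cdot \tfrac{N}{m} = (1-o(1))\,\alpha_\ell \cdot \tfrac{1+\eps}{1+\eps/2}\, n \ge \alpha_\ell n$, the final inequality holding once $\eps'$ is small and $n$ large. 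Hence $\even{\alpha_\ell n}$ and $\odd{\alpha_\ell n}$ lie in this interval, provided we can hit the correct parity.

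Parity is the crux, and it is precisely what the hypotheses on $M$ are designed for. If $\ell \in [k_0]$ we need an even cycle: blow-ups of matching edges are bipartite and the connecting walks can be taken with an even contribution, so every assembled cycle is even and the target $\even{\alpha_\ell n}$ is realised. If $\ell \in [k_0+1,k]$ we need an odd cycle, which no purely bipartite construction can give; here we use either an odd cycle that is part of $M$, or --- if $M$ consists only of edges --- the non-bipartiteness of $D$, which supplies an odd closed walk that, blown up and inserted once, changes the assembled length by an odd amount without affecting its colour or connectedness, so $\odd{\alpha_\ell n}$ is realised. The main obstacle is bookkeeping: fixing the hierarchy $1/n \ll \eps' \ll 1/m \ll \eps$ so that the gap between $(1+\eps)$ and $(1+\eps/2)$ absorbs all the $o(1)$ losses (discarded vertices, slightly-below-full densities, and the vertices spent on the connecting walks), and making the single ``parity twist through a non-bipartite component'' fully rigorous while retaining exact control of the final cycle's length.
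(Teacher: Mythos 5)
Your route is the paper's route: the paper disposes of \Cref{cor:main-complete-asymmetric} in one line as an immediate specialisation of \Cref{thm:main-general} (take $s=1$, $F$ a single vertex with a loop, $\beta_1=1$), and \Cref{thm:main-general} is in turn proved by combining the regularity/blow-up machinery (\Cref{lem:figaj-luczak-general}) with the almost-complete-to-complete reduction (\Cref{thm:matching-general}). What you have written is essentially a by-hand re-derivation of that proof in the special case $s=1$; it is not a genuinely different argument.

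One point worth tightening: the sentence ``Invoking the improvement at the heart of this paper, we may take the reduced graph on $[m]$ to be \emph{complete}'' is not an accurate description of how \Cref{thm:matching-general} (or \Cref{thm:matching-complete}) is used. The regularity lemma yields a reduced graph that is only \emph{almost} complete (irregular pairs are genuine non-edges, and you cannot meaningfully colour them), and nothing in the paper turns that reduced graph into a complete one. The actual mechanism is a contrapositive: if the almost-complete reduced graph had no $\ell$-coloured connected $2$-matching of the required size (for each $\ell$, with the non-bipartiteness condition for $\ell\in[k_0+1,k]$), then \Cref{thm:matching-general} would produce a slightly smaller $k$-coloured \emph{complete} graph with the same deficiency, contradicting the hypothesis of the corollary. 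Hence the almost-complete reduced graph itself already contains the required monochromatic connected $2$-matching, and it is \emph{that} matching, living in the actual reduced graph, that you blow up. With this correction your blow-up and parity discussion go through as you describe, and the remaining bookkeeping (the $\eps'\ll 1/m\ll\eps$ hierarchy, the odd parity via an odd cycle of $M$ or an odd closed walk in the non-bipartite component) matches what is packaged inside \Cref{lem:figaj-luczak-general}.
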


	\subsection{Varying the ground graph} 
		One can use a variant of \Cref{lem:figaj-luczak} to study \emph{bipartite path-Ramsey numbers}, where the ground graph is a complete bipartite graph, or, more generally, \emph{multipartite path-Ramsey numbers}; this generalisation is not mentioned in \cite{figaj-luczak}, but it can be proved in a very similar way. For some examples using this variant, see \cite{balogh-et-al,gyarfas-et-al-2007,bucic-letzter-sudakov-three,bucic-letzter-sudakov-multicolour}.

		We now state a generalisation of \Cref{cor:main-complete-asymmetric}, where the ground graph can vary and where we obtain a range of cycle lengths in one of the colours. Before doing so, we give a definition. 
		Let $F$ be a graph on vertex set $[s]$, which may have loops, and let $m_1, \ldots, m_s > 0$. Define $F(m_1, \ldots, m_s)$ to be the blow-up of $F$ where vertex $i$ is replaced by a set of $\ceil{m_i}$ vertices, for $i \in [s]$. More precisely, the vertex set of $F(m_1, \ldots, m_s)$ is $V_1 \cup \ldots \cup V_s$, where the sets $V_i$ are pairwise disjoint, $|V_i| = \ceil{m_i}$, and the edges of $F(m_1, \ldots, m_s)$ are the pairs $xy$ (with $x \neq y$, i.e.\ loops are not allowed) such that $x \in V_i$ and $y \in V_j$ for some $ij \in E(F)$. In particular, $V_i$ is a clique if $F$ has a loop at $i$, and is an independent set otherwise.

		\begin{thm} \label{thm:main-general} 
			Let $s$ and $k_0 \le k$ be integers, let $\alpha_1, \ldots, \alpha_k, \beta_1, \ldots, \beta_s > 0$, and let $F$ be a graph on vertex set $[s]$, which may have loops.

			Suppose that for every $\eps > 0$ and large $n$, every $k$-colouring of the blow-up $F((\beta_1 + \eps)n, \ldots, (\beta_s + \eps)n))$ of $F$ yields an $\ell$-coloured connected $2$-matching $M$ on at least $\alpha_{\ell} n$ vertices for some $\ell \in [k]$, where $M$ is contained in a non-bipartite $\ell$-coloured component if $\ell \in [k_0+1, k]$.

			Then for every $\eps > 0$ there exists $T$ such that for sufficiently large $n$, every $k$-colouring of $F((\beta_1 + \eps)n, \ldots, (\beta_s + \eps)n)$ satisfies the following: either for some $\ell \in [k_0]$ there is an $\ell$-coloured $C_t$ for every even $t \in [T, \alpha_{\ell}n]$, or for some $\ell \in [k_0 + 1, k]$ there is an $\ell$-coloured $C_t$ for every integer $t \in [T, \alpha_{\ell} n]$.
		\end{thm}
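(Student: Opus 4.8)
The proof will follow {\L}uczak's connected‑matchings template \cite{luczak}, with one new ingredient: a \emph{cleaning lemma} that upgrades the hypothesis — which speaks about exact blow‑ups of $F$ — to the same statement for blow‑ups of $F$ that are only \emph{almost} exact, missing a vanishing fraction of their edges. Concretely, fix $\eps>0$ and a $k$‑colouring of $G=F((\beta_1+\eps)n,\dots,(\beta_s+\eps)n)$. First I would apply a multicoloured version of Szemer\'edi's regularity lemma \cite{szemeredi} to $G$, refining the partition $V_1,\dots,V_s$, with small regularity parameter $\eps_{\mathrm{reg}}$ and at least $M_0$ clusters. Because $G$ is a \emph{complete} blow‑up of $F$, every cluster‑pair inside a loopy part of $F$, or across an edge of $F$, has total density $1$; hence the only ``bad'' pairs are the $\eps_{\mathrm{reg}}$‑irregular ones, at most $\eps_{\mathrm{reg}}\binom m2$ of them. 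Deleting the $O(\sqrt{\eps_{\mathrm{reg}}}\,m)$ clusters meeting many bad pairs and giving each surviving good pair a colour in which it has density at least $1/k$, I obtain a $k$‑coloured graph $R$ which is a spanning subgraph of an exact blow‑up $\widehat R$ of $F$ with part sizes $m_i'\ge(1-o_{\eps_{\mathrm{reg}}}(1))(\beta_i+\eps)n/L$ (where $L$ is the cluster size), each cluster meeting only $o(m)$ of the edges of $\widehat R\setminus R$. Fixing $\eps$ and then $\eps':=\eps/2$, and choosing $M_0$ large and $\eps_{\mathrm{reg}}$ small, the $m_i'$ are at least $(\beta_i+\eps')\tilde n$ for some $\tilde n\ge(1+c)\,n/L$ with $c=c(\eps,\beta_1,\dots,\beta_s)>0$, and $\tilde n$ is large enough to feed into the hypothesis.

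\textbf{The cleaning lemma and the main obstacle.}
The crux is: under the hypothesis, every $k$‑colouring of a graph $R$ as above (a spanning subgraph of an exact blow‑up of $F$ whose complement has maximum degree $o(m)$) contains, for some $\ell$, an $\ell$‑coloured connected $2$‑matching of order at least $(\alpha_\ell-o(1))\tilde n$, in a non‑bipartite $\ell$‑coloured component if $\ell\in[k_0+1,k]$. I would prove this by contraposition: assuming that in some $k$‑colouring of $R$ all colour‑$\ell$ components (respectively, all non‑bipartite colour‑$\ell$ components, for $\ell>k_0$) have maximum $2$‑matching below $(\alpha_\ell-o(1))\tilde n$, I would extend the colouring to all of $\widehat R$ raising none of these quantities by more than $o(\tilde n)$, contradicting the hypothesis applied to $\widehat R$ with parameter $\eps'$. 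Since the order of a maximum $2$‑matching equals twice the fractional matching number, the tool for choosing the colour of a missing pair $CC'$ is the structure of maximum fractional matchings — Pulleyblank's description of maximum $2$‑matchings \cite{pulleyblank} (or the Gallai‑Edmonds decomposition): in each colour $\ell$ and each colour‑$\ell$ component one identifies a ``core'' of vertices saturated by every maximum fractional matching, and the safe choice is a colour in which at least one of $C,C'$ lies in the core \emph{and} both already lie in the same colour‑$\ell$ component (so that adding the edge changes neither the vertex set of a component nor its fractional matching number), while also not turning a large bipartite colour‑$\ell$ component non‑bipartite when $\ell>k_0$. I expect this to be the hard part: one must argue that the missing pairs admitting no obviously safe colour are confined to a structured, controllable set — because their endpoints have almost all neighbours inside the cores, which the contrapositive assumption keeps small in each component — and, more delicately, that over all $o(m^2)$ missing pairs the accumulated increase stays $o(\tilde n)$, which requires a careful, partly global choice of colours rather than independent local ones.

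\textbf{From connected $2$‑matchings to cycles.}
Given the cleaning lemma, $R$ contains an $\ell$‑coloured connected $2$‑matching $M$ of order at least $(\alpha_\ell-o(1))\tilde n$ for some $\ell$, in a non‑bipartite colour‑$\ell$ component if $\ell>k_0$. The rest is the standard blow‑up argument: each edge of $M$ is a regular pair of density at least $1/k$, hence spans colour‑$\ell$ paths of every length from a fixed constant up to $(2-o(1))L$; each odd cycle of $M$ spans colour‑$\ell$ paths and odd cycles up to $(1-o(1))$ times its length times $L$; and, using connectedness of the colour‑$\ell$ component, one links these pieces through connector clusters into colour‑$\ell$ paths of every length from a constant $T$ up to $(1-o(1))\cdot(\text{order of }M)\cdot L\ge(1-o(1))(1+c)\alpha_\ell n\ge\alpha_\ell n$. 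Closing these paths gives colour‑$\ell$ cycles $C_t$ for all even $t\in[T,\alpha_\ell n]$ in general and, when $\ell>k_0$, for all integers $t\in[T,\alpha_\ell n]$, since then the colour‑$\ell$ component of $R$ is non‑bipartite and so contains an odd cycle with which to correct parity. As the number of clusters $m$ is bounded in terms of $\eps_{\mathrm{reg}}$ alone, $T$ depends only on $F$, the $\beta_i$ and $\eps$, not on $n$. Choosing the constants in the order $\eps$, then $\eps'$ and the hypothesis' threshold, then $M_0$, then $\eps_{\mathrm{reg}}$, and finally $n$ large, completes the argument.
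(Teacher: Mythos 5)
Your architecture matches the paper's: apply the multicoloured regularity lemma to the blow-up, pass to a reduced graph $R$ that is an almost-complete blow-up of $F$, prove a ``cleaning lemma'' that transfers the complete-blow-up hypothesis to $R$, and then lift the resulting monochromatic connected $2$-matching to cycles via the standard blow-up argument. The regularity and lifting steps are exactly what the paper delegates to its generalised Figaj--\L{}uczak lemma (\Cref{lem:figaj-luczak-general}), so that part is fine (if re-derived rather than cited).

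The gap is in the cleaning lemma, which is the actual content of the theorem and corresponds to \Cref{thm:matching-general}. You propose to \emph{add} all missing edges of $\widehat R\setminus R$, each coloured so that it does not increase any relevant connected $2$-matching number. As you yourself suspect, this strategy breaks: there can be missing pairs $CC'$ for which \emph{every} colour is dangerous --- pairs where, in every colour $\ell$, the endpoints lie in different $\ell$-coloured components, or both lie outside the $\ell$-core of their component, or (for $\ell>k_0$) the edge would create an odd cycle in a large bipartite component. Your criterion (``same colour-$\ell$ component, one endpoint in the core'') simply has no colour to offer such a pair, and you give no mechanism to handle them. Nor does an averaging argument obviously save you: the problem is not that the total increase accumulates, but that a single unlucky edge can merge two medium components into one with a too-large matching. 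The paper's fix is to \emph{not} colour all of $\widehat R$. Instead, extend $R$ to a \emph{maximal} multicoloured subgraph $G_1\subseteq\widehat R$ still satisfying the $2$-matching bounds; the Gallai--Edmonds/Pulleyblank structure forces every monochromatic component of $G_1$ to be a complete blow-up of a star (or a complete bipartite graph in the odd colours), and a type-counting argument (\Cref{claim:general-matching-complement}) shows that the \emph{non-edges} of $G_1$ inside $\widehat R$ admit only a very small matching. Deleting the $O(\delta n)$ vertices of that matching yields a genuine complete blow-up of $F$, slightly smaller, with no large monochromatic connected $2$-matching, contradicting the hypothesis. So the key insight you are missing is to delete a small vertex set rather than colour all missing edges; your edge-colouring approach, as described, does not close.
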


	\subsection{Overview of the proof}

		Our proof of \Cref{thm:main-general} is based on \Cref{thm:matching-general}, stated in \Cref{sec:matchings}. The latter theorem asserts that given a $k$-coloured almost complete graph $G$ with no monochromatic connected matchings of size $m$, there is a slightly smaller $k$-coloured complete graph $G'$ that avoids monochromatic connected matchings of size $m$ (in fact, it contains a similar but more general result for blow-ups of a fixed graph with different bounds on the largest $\ell$-coloured connected matching for different colours). With \Cref{thm:matching-general} in hand, \Cref{thm:main-general} can be easily derived from {\L}uczak's connected matching method (see \Cref{sec:matchings} for a proof of this derivation).

		To prove \Cref{thm:main-general}, we take $G_1$ to be a largest $k$-multicoloured graph (so edges can have several colours) on $V(G)$ that contains $G$ and that satisfies the same requirement on the size of the maximum monochromatic connected matching. It is easy to see that $G_1$ has relatively few monochromatic components (because two small components of the same colour in $G$ can be joined in $G_1$). A consequence of the Gallai-Edmonds theorem, stated in \Cref{sec:prelims}, implies that every monochromatic component in $G_1$ is a complete blow-up of a star. These two properties can be leveraged to show that any matching in the complement $\comp{G}_1$ of $G_1$ is small. One can thus obtain a suitable $G'$ by removing the vertices of a maximum matching in $\comp{G}_1$ from $G_1$. 

	\subsection{Background}

		In \cite{bucic-letzter-sudakov-three}, together with Buci\'c and Sudakov, we determined, asymptotically, the \emph{bipartite $3$-colour Ramsey number} of paths and even cycles. By a variant of \Cref{lem:figaj-luczak}, to do so it suffices to determine, asymptotically, the size of the largest monochromatic connected matching guaranteed to exist in a $3$-coloured balanced almost complete bipartite graph. As a first step, we determined the size of the largest monochromatic connected matching in a $3$-colouring of $K_{n,n}$, for every $n$. It is often the case that proofs about monochromatic connected matchings in complete graphs (or complete bipartite graphs in this case) can be adapted to almost complete graphs. However, due to the inductive nature of our proof, we were unable to find such an adaptation. Instead, we proved (implicitly; see Theorem 12 in \cite{bucic-letzter-sudakov-three}) a version of \Cref{thm:matching-general}, which reduces the $3$-colour Ramsey question about connected matchings in almost complete bipartite graphs to complete bipartite graphs. 
		Our proof of \Cref{thm:matching-general} is similar, with a main difference being the use of the Gallai-Edmonds decomposition (see \Cref{sec:prelims}) which replaces our use of K\"onig's theorem in \cite{bucic-letzter-sudakov-three}.  
	
	\subsection{Stability}

		We note that our method is also handy when proving stability results about monochromatic connected matchings, allowing one to prove such results for complete graphs instead of almost complete graphs. This is likely to be helpful when determining Ramsey numbers of paths and cycles precisely. See \Cref{subsec:stability} for more details.

	\subsection{Structure of the paper}
	
		In the next section, \Cref{sec:matchings}, we state \Cref{thm:matching-general} which, as mentioned above, reduces Ramsey-type problems about connected matchings in almost complete graphs to complete graphs. We then show how to deduce \Cref{thm:main-general} from \Cref{thm:matching-general} (note that \Cref{cor:main-complete-symmetric,cor:main-complete-symmetric-frac,cor:main-complete-asymmetric} follow immediately from \Cref{thm:main-general}). In \Cref{sec:prelims} we state the Gallai-Edmonds decomposition Theorem (\Cref{thm:ge}) and some consequences of it. We then prove first a simplified version of \Cref{thm:matching-general} and then the general version in \Cref{sec:proofs}. In \Cref{sec:three-colours} we show how our method can be applied to provide a simple proof of the asymptotically tight bound $r_3(P_n) \le (2 + o(1))n$ on the $3$-colour Ramsey number of paths (as mentioned previously, $r_3(P_n)$ is known precisely for large $n$; see \cite{gyarfas-et-al}). 

\section{From almost complete graphs to complete graphs} \label{sec:matchings}

	In this section we state \Cref{thm:matching-general} which allows us to reduce Ramsey-type problems about connected matchings in almost complete graphs to complete graphs, and more generally from almost blow-ups of a fixed graph $F$ to blow-ups of $F$. We then show how to deduce \Cref{thm:main-general} from \Cref{thm:matching-general}, using a variant of \Cref{lem:figaj-luczak}.
	Before stating the lengthy \Cref{thm:matching-general}, we state the following special case of it.

	\begin{thm} \label{thm:matching-complete}
		Let $k$ be an integer, let $\beta \ge \alpha > 0$, and let $\eps > 0$ be sufficiently small. Set $\delta = \frac{\eps}{2} \cdot \big(\frac{\alpha}{16\beta}\big)^{2k}$, and let $N \le \beta n$. 

		Suppose that $G$ is a $k$-coloured graph on at least $N + \eps n$ vertices, where every vertex has at most $\delta n$ non-neighbours, and there is no monochromatic connected matching on at least $\alpha n$ vertices. 
		Then there exists a $k$-colouring $G'$ of $K_N$ that contains no monochromatic connected matching on at least $\alpha n$ vertices. Moreover, there exists such $G'$ that contains an induced subgraph of $G$ on $N$ vertices.
	\end{thm}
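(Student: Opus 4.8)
The plan is the one announced in \Cref{sec:matchings}: complete $G$ to a maximal multicoloured graph, read off its shape from Gallai--Edmonds, and delete a small vertex set. Let $G_1$ be a multicoloured graph on $V(G)$ — each edge carrying a non-empty set of colours from $[k]$ — that contains $G$ (every edge of $G$ retaining its colour), has no monochromatic connected matching on $\ge \alpha n$ vertices, and, subject to this, has as many colour--edge incidences as possible. Then $G_1$ is \emph{maximal}: for every pair $xy$ with no colour and every colour $\ell$, colouring $xy$ by $\ell$ creates a monochromatic connected matching on $\ge \alpha n$ vertices. Let $\comp{G_1}$ be the graph of those pairs that receive no colour; since $G \subseteq G_1$ we have $\comp{G_1} \subseteq \comp{G}$, so $\comp{G_1}$ has maximum degree at most $\delta n$, i.e.\ $G_1$ is essentially complete. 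It suffices to show that $\comp{G_1}$ has no matching with more than $\eps n/2$ edges. Indeed, the vertex set $U$ of a maximum matching of $\comp{G_1}$ is then a vertex cover, so $V(G_1)\setminus U$ spans a clique of $G_1$ on at least $|V(G_1)|-\eps n\ge N$ vertices; restricting $G_1$ to any $N$ of these and keeping on each edge its original colour if the edge lies in $G$ and any one of its colours otherwise yields a $k$-colouring $G'$ of $K_N$ with $G'\subseteq G_1$ (hence with no monochromatic connected matching on $\ge\alpha n$ vertices) that contains the induced subgraph of $G$ on those $N$ vertices.

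Two structural features of $G_1$ drive the estimate. First, for each colour $\ell$ there are few \emph{large} $\ell$-coloured components, where ``large'' means containing a matching on at least $\alpha n/5$ vertices: two $\ell$-components whose largest matchings each cover fewer than $\alpha n/5$ vertices could be joined into one by recolouring a single pair between them (such a pair is an edge, as $G_1$ is essentially complete) without reaching $\alpha n$, contradicting maximality; so at most one $\ell$-component fails to be large, and since large ones span at least $2\alpha n/5$ vertices each there are $O(\beta/\alpha)$ of them. Second, maximality forces each monochromatic component to be edge-maximal on its own vertex set subject to its matching number (adding a missing edge of that colour would create a monochromatic connected matching on $\ge\alpha n$ vertices), so the consequence of the Gallai--Edmonds theorem from \Cref{sec:prelims} applies and every monochromatic component of $G_1$ is a complete blow-up of a star, with a clique ``centre'' class completely joined to pairwise non-adjacent clique ``leaf'' classes. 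A centre class, being a clique of $G_1$, has no incident $\comp{G_1}$-edges inside its component. When $k=1$, where $G_1$ is a single such blow-up (it is connected, being essentially complete), each leaf vertex sees all other leaves among its at most $\delta n$ non-neighbours, so the vertices outside the centre number at most $2\delta n$; then $\comp{G_1}$ lives on at most $2\delta n$ vertices and the bound $\eps n/2$ is immediate.

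For $k\ge 2$ one iterates over the colours. The key local fact is that if $xy$ receives no colour and $\ell$ is a colour, then, since colouring $xy$ by $\ell$ creates a large monochromatic connected matching, a short case analysis (a chord inside one component, a pendant edge at one component, or an edge joining two components) shows that at least one of $x,y$ lies in a large $\ell$-component. Given a matching $M$ of $\comp{G_1}$, for each colour in turn pigeonhole which of its $O(\beta/\alpha)$ large components, and which endpoint, an edge of $M$ uses; passing to the corresponding sub-matching loses a factor of at most $(16\beta/\alpha)^2$ per colour — this is the origin of the exponent $2k$ and the ratio $\alpha/16\beta$ in $\delta$ — and leaves a sub-matching that is localised inside one fixed large component of every colour, say with the two endpoints of each surviving edge split according to a fixed partition $[k]=T\sqcup T^{c}$. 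On such a localised matching the star-blow-up structure and essential completeness are played off against the sparseness of $\comp{G_1}\subseteq\comp{G}$: leaf classes of the relevant components are forced to behave like those in the $k=1$ case, so these components are nearly cliques unless their centres are enormous, while the edges of the localised matching are non-edges of the essentially complete $G$; this forces the localised matching, and hence $M$, to be short. The result is a matching of $\comp{G_1}$ with at most $\delta n\cdot(16\beta/\alpha)^{2k}=\eps n/2$ edges, which completes the proof; the general \Cref{thm:matching-general} follows from the same argument with the additional bookkeeping needed to respect the host graph $F$.

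The delicate point is the last step for $k\ge 2$: making the colour-by-colour localisation rigorous and extracting a contradiction from the way the monochromatic star-blow-up structures of the different colours interact on the same vertex set — in particular coping with monochromatic components having many short leaf classes (where the $k=1$ argument no longer applies directly, since a leaf vertex may have many neighbours in other colours) and tracking the multiplicative loss per colour precisely enough to land on exactly the value of $\delta$ in the statement. By contrast, the completion step and the reduction in the first paragraph are routine, as is the reading-off of the star structure once the Gallai--Edmonds consequence is available.
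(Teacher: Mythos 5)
Your outer framework is the same as the paper's: complete $G$ to a maximal $k$-multicoloured graph $G_1$ with no monochromatic connected matching on $\ge \alpha n$ vertices, note that maximality leaves at most one small component per colour and (via the Gallai--Edmonds consequence, \Cref{lem:ge}) makes every monochromatic component a complete blow-up of a star, bound the matching number of $\comp{G}_1$ by $\eps n/2$, and delete the vertices of a maximum matching of $\comp{G}_1$ to obtain the complete graph $G'$. The completion step and the $k=1$ case are fine. But the heart of the theorem is exactly the step you leave open: the bound on the matching number of $\comp{G}_1$ when $k\ge 2$ (the paper's \Cref{claim:matching-complement}), and your sketch of it has a genuine gap. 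Your ``key local fact'' and pigeonholing only fix, per colour, one large component containing \emph{one} endpoint of each matching edge, and the concluding sentence --- that the star structures together with the sparseness of $\comp{G}$ force the localised matching to be short because the relevant components are ``nearly cliques unless their centres are enormous'' --- is not true for $k\ge 2$: a tail vertex of an $\ell$-component has many $\ell$-non-neighbours (all other leaf cliques), and every one of those pairs may be an edge of $G_1$ in another colour, so no degree contradiction arises colour by colour. You flag this yourself, but it is not a technicality to be tidied up; it is the content of the claim.

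The paper's mechanism for this step is different and is what you are missing. Each vertex is given a full type recording, for \emph{every} colour $\ell$, its $\ell$-component and whether it lies in the head or tail of that component's star; by the bound on the number of components there are at most $(8\beta/\alpha)^k$ types, so a matching of $\comp{G}_1$ larger than $(16\beta/\alpha)^{2k}\delta n$ has a submatching $M_0$ of size $>4^k\delta n$ whose edges all join two fixed types. One then refines $M_0\supseteq M_1\supseteq\cdots\supseteq M_k$, handling colour $\ell$ at step $\ell$: if the two types lie in different $\ell$-components there is nothing to do; otherwise both types must lie in the tail $T(U)$ of the common component $U$ (a head vertex would be $\ell$-adjacent to the other side, contradicting that these are non-edges), and since the $\ell$-edges inside $T(U)$ form a disjoint union of cliques, a \emph{random bipartition of these cliques} keeps an expected quarter of the matching while destroying all $\ell$-edges between the two sides. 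After all $k$ colours this yields sets $X_k,Y_k$ of size $>\delta n$ with no edge of $G_1$ in any colour between them, contradicting the $\delta n$ bound on non-degrees. It is this simultaneous, all-colours typing plus the clique-bipartition trick that makes the interaction of the different star structures tractable; without it (or an equivalent device) your localised matching need not be short, and the proof does not go through.
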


	We now state the main result in this section, generalising \Cref{thm:matching-complete}. This theorem is applicable for both connected matchings and connected $2$-matchings.

	\begin{thm} \label{thm:matching-general}
		Let $s$, $k_0 \le k$ be integers, let $\beta \ge \alpha > 0$, and let $\eps > 0$ be sufficiently small. Set $\delta = 2 \cdot \big( \frac{56s^3\alpha}{\beta} \big)^{2k}$, and let $N_1, \ldots, N_s \le \beta n$ and $\alpha_1, \ldots, \alpha_k \ge \alpha$.
		Let $F$ be a graph on vertex set $[s]$ (possibly with loops), and denote $\G = F(N_1 + \eps n, \ldots, N_s + \eps n)$ and $\G' = F(N_1, \ldots, N_s)$.

		Suppose that $G$ is a $k$-coloured spanning subgraph of $\G$ such that $d_G(u) \ge d_{\G}(u) - \delta n$ for every vertex $u$; there is no $\ell$-coloured connected ($2$-)matching on at least $\alpha_{\ell} n$ vertices for $\ell \in [k_0]$; and there is no $\ell$-coloured connected ($2$-)matching on at least $\alpha_{\ell} n$ vertices contained in a non-bipartite $\ell$-coloured component for $\ell \in [k_0 + 1, k]$.

		Then there exists a $k$-colouring $G'$ of $\G'$ such that there is no $\ell$-coloured connected ($2$-)matching on at least $\alpha_{\ell} n$ vertices for $\ell \in [k_0]$; and there is no $\ell$-coloured connected ($2$-)matching on at least $\alpha_{\ell} n$ vertices contained in a non-bipartite $\ell$-coloured component for $\ell \in [k_0 + 1, k]$. Moreover, there exists such $G'$ which contains a induced subgraph of $G$ with $N_i$ vertices from the set replacing vertex $i$ in $F$ for $i \in [s]$.
	\end{thm}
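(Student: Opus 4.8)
The plan is to follow the overview: pass to a maximal multicoloured supergraph, use Gallai--Edmonds to understand its monochromatic components, bound the complement, and then delete a maximum matching of the complement. Concretely, let $G_1$ be a largest $k$-\emph{multi}coloured spanning subgraph of $\G$ (edges may carry several colours, non-edges of $\G$ remain non-edges) containing $G$ and still satisfying the hypothesis: for $\ell\in[k_0]$ no $\ell$-coloured connected $(2)$-matching on $\ge\alpha_\ell n$ vertices, and for $\ell\in[k_0+1,k]$ no such matching inside a non-bipartite $\ell$-component. Maximality gives the key structural fact: for each colour $\ell$, any two distinct $\ell$-coloured components of $G_1$ are \emph{non-adjacent in $\G$} (otherwise we could add a $\G$-edge between them with colour $\ell$, merging them; one checks the merged component still has no large $\ell$-matching because each piece had an $\ell$-matching on $<\alpha_\ell n$ vertices and their union would give one on $<2\alpha_\ell n$ — here I would need a short argument that in fact the bound $\alpha_\ell n$ is not crossed, which is where the odd-cycle/non-bipartite bookkeeping enters, and is the reason the statement is phrased with strict maximality of $G_1$ rather than a crude union bound). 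In particular each colour class has at most $\binom{s}{2}+s$ "blocks" with respect to the partition $V_1,\dots,V_s$ — more carefully, within each $V_i$ and across each adjacent pair $V_iV_j$ the colour-$\ell$ graph is connected, so $G_1$ has at most $O(s^2)$ monochromatic components per colour, hence $O(s^2 k)$ in total.

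Next I would invoke the Gallai--Edmonds consequence (Theorem~\ref{thm:ge} and whatever corollary Section~\ref{sec:prelims} supplies): since $G_1$ has no $\ell$-coloured connected $(2)$-matching on $\ge\alpha_\ell n$ vertices in the relevant components, each such monochromatic component $H$ of $G_1$ must be, essentially, a complete blow-up of a star — there is a small "centre" set $D$ (of size roughly the deficiency, $O(\alpha_\ell n)$) whose removal leaves only isolated vertices, or in the $2$-matching case leaves only factor-critical pieces that must themselves be small. The point is that all but $O(\alpha_\ell n)$ vertices of $H$ form an independent set (in $H$) attached only to $D$. Consequently, inside $G_1$, outside a set $B$ of total size $O(s^2 k\cdot \alpha n)$ (the union of all the centres over all colours and components), every vertex $v$ is, in each colour, adjacent only to the centre of its colour-$\ell$ component. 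Combining this with the first structural fact, I would show that for $v,w\notin B$ in the same $V_i$ or in adjacent $V_i,V_j$: if $vw\notin E(G_1)$ then $vw$ is a non-edge because in \emph{every} colour $\ell$ at least one of $v,w$ lies outside the colour-$\ell$ centre and hence is a near-isolated vertex whose only $\ell$-neighbours are that centre — so $v$ and $w$ cannot be $\ell$-adjacent. This forces $\comp{G}_1$ (the complement within $\G$) restricted to $V\setminus B$ to be a union of complete multipartite-type pieces of bounded "width", and in particular a maximum matching $M^*$ of $\comp{G}_1$ has $|M^*|=O(s^2 k\cdot\alpha n+ (\text{degree deficiency contribution}))$. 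Here the degree hypothesis $d_G(u)\ge d_{\G}(u)-\delta n$ enters: $\comp{G}_1\subseteq\comp{G}$ has maximum degree $\le\delta n$, so together with the bounded-width structure on $V\setminus B$ one gets $|V(M^*)|\le |B| + O(\delta n\cdot(\text{something}))$, and with the stated $\delta = 2(56s^3\alpha/\beta)^{2k}$ this is at most $\eps n$ — worst-case tuning of constants, which I would postpone to the routine-calculation stage.

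Finally, remove $V(M^*)$ from $G_1$: the resulting graph on $\G$-vertex-set minus $V(M^*)$ has no non-edges at all (every $\G$-edge not in $G_1$ had both endpoints matched by $M^*$, by maximality of $M^*$), so it is a complete multicoloured blow-up of $F$ on $\ge N_i$ vertices in each part $V_i$; discard further vertices arbitrarily down to exactly $N_i$ in part $i$, and for each edge pick one of its colours to get an honest $k$-colouring $G'$ of $\G'=F(N_1,\dots,N_s)$. Monotonicity of the forbidden structures under taking subgraphs and under reducing a multicolouring to a colouring shows $G'$ still satisfies the conclusion. The "moreover" clause — that $G'$ can be taken to contain an induced subgraph of the \emph{original} $G$ with the right part sizes — follows by instead deleting from $G$ itself the vertex set $B\cup V(M^*)$ (a set of size $\le\eps n$, which we have room for since $\G$ has $N_i+\eps n$ vertices in part $i$): on the remaining vertices $G$ and $G_1$ agree edge-for-edge except possibly on colours, and $G$ has no non-edges there, so an induced subgraph of $G$ on $N_i$ vertices per part is already a complete blow-up, and its colouring inherits the forbidden-structure-free property from $G$. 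I expect the main obstacle to be the first structural step — proving that maximality of $G_1$ really does force distinct same-colour components to be $\G$-non-adjacent \emph{while respecting the exact threshold $\alpha_\ell n$ and the bipartite/non-bipartite dichotomy for $\ell>k_0$} — since a naive merging argument only gives the bound $2\alpha_\ell n$; handling the odd-cycle case and the $2$-matching case uniformly is the delicate part, and is presumably why the Gallai--Edmonds corollary in Section~\ref{sec:prelims} is stated in the sharp form it is.
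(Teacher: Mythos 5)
Your skeleton (pass to a maximal multicoloured supergraph $G_1$, use Gallai--Edmonds, bound a matching in the complement, delete it) is the paper's skeleton, but the two steps that carry the proof rest on claims that are false as stated. First, maximality of $G_1$ does \emph{not} make distinct $\ell$-coloured components non-adjacent in $\G$: if both components already contain $\ell$-coloured connected ($2$-)matchings on more than $\alpha_\ell n/2$ vertices (but fewer than $\alpha_\ell n$), joining them really would create one on at least $\alpha_\ell n$ vertices, so the edge cannot be added. This is exactly the obstacle you flag, and it does not go away; as a result your $O(s^2)$ bound on the number of components per colour has no proof. The paper sidesteps it: it only merges components of order less than $\alpha_\ell n/2$ (their union still has fewer than $\alpha_\ell n$ vertices, so no bookkeeping about odd cycles, bipartiteness or $2$-matchings is needed), which shows there are at most $e(F)$ such multi-vertex small components per colour, while components of order at least $\alpha_\ell n/2$ number at most $|\G|/(\alpha_\ell n/2)$; together this gives the bound of roughly $6s^2\beta/\alpha$ components per colour, which is all that is used. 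Second, your structural picture of a component --- ``all but $O(\alpha_\ell n)$ vertices form an independent set attached only to a small centre'' --- is wrong in the matching case: \Cref{lem:ge} gives a complete blow-up of a star, whose tail is a disjoint union of cliques each of which may have up to order $\alpha_\ell n$ vertices. A non-centre vertex can therefore have many $\ell$-neighbours besides the centre, and your deduction that a $\G$-non-edge between two non-centre vertices forces non-adjacency in \emph{every} colour fails for two tail vertices lying in different cliques of the same component; handling exactly this configuration is why the paper's proof of \Cref{claim:general-matching-complement} uses a random bipartition of the tail cliques.

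Third, and decisively, your final deletion does not fit the available slack. A centre only needs to have fewer than roughly $\alpha_\ell n/2$ vertices, so the union $B$ of all centres over all colours and components can have size a constant multiple of $\alpha n$ (indeed of $n$), vastly exceeding the $\eps n$ spare vertices per part that the theorem provides; you cannot afford to delete $B\cup V(M^*)$. The paper never removes centres. Instead it proves directly that \emph{every} matching in $\comp{G}_1$ has size at most $\big(\tfrac{56 s^3\beta}{\alpha}\big)^{2k}\delta n\le \eps n/2$: a larger matching would contain a submatching of size greater than $4^k\delta n$ whose ends have two fixed types (type $=$ colour components, position $H/T/L/R/I$ in each, and blob of $F$, so both ends of the submatching lie in adjacent blobs and are fully joined in $\G$); then, colour by colour, either no $\ell$-coloured edges cross between the two types, or both sides lie in tails (or one side of a bipartite component), and a random bipartition of the tail cliques keeps a quarter of the submatching while destroying all $\ell$-coloured crossing edges. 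After $k$ rounds one has two sets of size greater than $\delta n$, fully joined in $\G$ but with no $G_1$-edge between them, contradicting $d_{G_1}(u)\ge d_{\G}(u)-\delta n$. Only the vertices of a maximum matching of $\comp{G}_1$ are then deleted, which is within budget. (A smaller point: after this deletion it is $G_1$, not $G$, that has no missing $\G$-edges --- $G$ itself may still miss up to $\delta n$ edges at each vertex --- and the ``moreover'' clause follows simply because $G'$ is induced in $G_1\supseteq G$, not because $G$ becomes complete on the remaining vertices.)
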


	\begin{rem} \label{rem:multigraphs}
		It is sometimes convenient to allow edges to have multiple colours (see, e.g., \cite{balogh-et-al-b,gyarfas-et-al}).
		One can think of the graphs in the above statements as being multicolour. We even use multicoloured graphs in the proofs of these statements.
	\end{rem}
	
	The following lemma formalises {\L}uczak's connected matchings method in a fairly general form. It generalises Lemma 3 in \cite{figaj-luczak} and can be proved similarly; we do not include a proof here.
	
	\begin{lem}[Generalisation of Lemma 3 in \cite{figaj-luczak}] \label{lem:figaj-luczak-general}
		Let $s$, $k_0 \le k$ be integers, let $\alpha_1, \ldots, \alpha_k, \beta_1, \ldots, \beta_s > 0$, and let $F$ be a graph on vertex set $[s]$ (possibly with loops).
		
		Suppose that for every $\eps > 0$ there exists $\delta > 0$ such that for every large $n$, if $G$ is a spanning subgraph of $\G := F((\beta_1 + \eps)n, \ldots, (\beta_s + \eps)n)$ with $e(G) \ge (1 - \delta) e(\G)$, the following holds: every $k$-colouring of $G$ has an $\ell$-coloured connected $2$-matching on at least $\alpha_{\ell} n$ vertices for some $\ell \in [k]$, which is contained in a non-bipartite $\ell$-coloured component if $\ell \in [k_0+1, k]$.

		Then for every $\eps > 0$ there exists $T$ such that for sufficiently large $n$, every $k$-colouring of $F((\beta_1 + \eps)n, \ldots, (\beta_s + \eps)n)$ satisfies the following: either  there is an $\ell$-coloured $C_t$ for every even $t \in [T, \alpha_{\ell}n]$ for some $\ell \in [k_0]$, or  there is an $\ell$-coloured $C_t$ for every integer $t \in [T, \alpha_{\ell} n]$ for some $\ell \in [k_0 + 1, k]$.
	\end{lem}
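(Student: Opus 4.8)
The plan is to run the standard Szemer\'edi-regularity argument of {\L}uczak \cite{luczak} and of Figaj--{\L}uczak \cite{figaj-luczak}, carrying the blow-up structure of $F$ through the reduction; the statement differs from Lemma~3 of \cite{figaj-luczak} only in allowing a general host $F(\cdot)$ in place of $K_{\cdot}$, asking for the whole range of cycle lengths, and the non-bipartiteness bookkeeping, all of which are routine additions. Fix $\eps>0$; we must exhibit the promised $T$. Given a $k$-colouring of $\mathcal{H}:=F((\beta_1+\eps)n,\dots,(\beta_s+\eps)n)$, apply the multicolour regularity lemma with an initial partition refining the natural partition $V_1\cup\dots\cup V_s$ of $V(\mathcal{H})$ into the $s$ blow-up classes and with regularity parameter $\eps_0$ to be fixed later. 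After removing an exceptional set and the $o(1)$-fraction of clusters lying in too many irregular pairs, we obtain clusters of a common size $L$, each inside a single $V_i$. Form the reduced graph $R$ on the surviving clusters: for $C\subseteq V_i$ and $C'\subseteq V_j$ with $ij\in E(F)$ (allowing $i=j$ when $F$ has a loop at $i$), include the edge $CC'$ in $R$ whenever $(C,C')$ is $\eps_0$-regular in every colour, and assign to $CC'$ every colour whose density across $(C,C')$ is at least $1/k$ — at least one such colour exists because $(C,C')$ spans a complete bipartite graph in $\mathcal{H}$. (By \Cref{rem:multigraphs} we may let $R$ be multicoloured, or simply keep one colour per edge.) Since each cluster lies in a single class, $R$ is a spanning subgraph of $F(N_1,\dots,N_s)$, where $N_i$ is the number of surviving clusters in $V_i$ and $N_i=(1-o(1))(\beta_i+\eps)n/L$; moreover only an $O(\eps_0)$-fraction of the cluster pairs crossing edges of $F$ are irregular (these pairs being a constant fraction of all pairs, as $F$ has an edge and the $\beta_i$ are positive constants), so $e(R)\ge(1-\delta)\,e(F(N_1,\dots,N_s))$ once $\eps_0$ is small enough in terms of $\delta$, $F$ and $\beta_1,\dots,\beta_s$.

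Now feed $R$ into the hypothesis. Put $\eps':=\eps/2$ and let $\delta=\delta(\eps')>0$ be the constant the hypothesis supplies for $\eps'$; choose $\eps_0$ small enough that the bound above holds with this $\delta$ (and, below, that $(1-O(\eps_0))(1+c)>1$, where $c$ is the constant of the next sentence). Let $n'$ be the largest integer with $\ceil{(\beta_i+\eps')n'}\le N_i$ for all $i$; since $N_i=(1-o(1))(\beta_i+\eps)n/L$ and $(\beta_i+\eps)/(\beta_i+\eps')\ge 1+c$ for a constant $c=c(\eps,\beta)>0$ uniform in $i$, we get $n'\ge(1+c)\,n/L$ for $n$ large. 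Discarding a few more clusters so that $V_i$ retains exactly $\ceil{(\beta_i+\eps')n'}$ of them (which changes the relative density by at most a constant factor, harmless since $\eps_0$ was chosen accordingly), $R$ becomes a spanning subgraph of $F((\beta_1+\eps')n',\dots,(\beta_s+\eps')n')$ of density at least $1-\delta$. The hypothesis, applied with parameter $\eps'$ and with $n'$ in place of $n$, then yields a colour $\ell$ and an $\ell$-coloured connected $2$-matching $M$ in $R$ covering at least $\alpha_\ell n'$ clusters, which lies in a non-bipartite $\ell$-coloured component $K$ of $R$ when $\ell\in[k_0+1,k]$.

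Finally, convert $M$ into cycles in $\mathcal{H}$; this is the usual embedding step. Each edge of the colour-$\ell$ subgraph of $R$ is an $\eps_0$-regular pair of $\ell$-density at least $1/k$, so by the standard path/cycle lemmas for regular pairs one can, after reserving a bounded number of vertices in each cluster for connectors: route an $\ell$-coloured path threading any prescribed walk of $K$ using only $O(1)$ vertices per cluster along the walk; realise, inside a matched pair $(A,B)$ of $M$, a subpath on any prescribed number of vertices from a constant up to $(1-O(\eps_0))(|A|+|B|)$; and close a path into a cycle using the connectedness of $K$. Chaining the edges and odd-cycle components of $M$ together along short connecting walks in $K$ — which, as $K$ has bounded order, contribute only a constant number of vertices in total — produces for every even $t$ with $T\le t\le(1-O(\eps_0))|M|L$ an $\ell$-coloured $C_t$ in $\mathcal{H}$; and since $|M|L\ge\alpha_\ell n'L\ge(1+c)\alpha_\ell n$, for $n$ large this range contains every even $t\in[T,\alpha_\ell n]$. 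If $\ell\in[k_0+1,k]$, the non-bipartite component $K$ contains an odd closed walk in the colour-$\ell$ subgraph of $R$, and inserting a short traversal of it into the construction flips the parity of the resulting cycle, so in this case one obtains an $\ell$-coloured $C_t$ for \emph{every} integer $t\in[T,\alpha_\ell n]$. Taking $T$ to be the resulting constant — depending only on $\eps$, $F$ and $\beta_1,\dots,\beta_s$, through $\eps_0$ and the bounds of the regularity lemma — completes the proof.

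I expect the main obstacle to be the final paragraph: carrying out the path-and-cycle embedding so that \emph{every} admissible length in the full range $[T,(1-o(1))\alpha_\ell n]$ is realised, with all the constants ($\eps_0$, the density threshold, the cluster size $L$, and $T$) chosen consistently, and in particular the parity switch via the odd closed walk and the uniform treatment of the $s$ blow-up classes. None of this is conceptually new, which is presumably why \cite{figaj-luczak} and the present paper state the lemma without proof.
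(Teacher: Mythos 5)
Your proposal is correct and follows exactly the route the paper intends: the paper omits the proof of this lemma, noting it is proved like Lemma 3 of \cite{figaj-luczak}, and your sketch is that standard argument (regularity lemma refined along the blow-up classes, a dense reduced spanning subgraph of a blow-up of $F$, the hypothesis applied at a rescaled $n'$, and the usual lifting of a connected $2$-matching to cycles of all admissible lengths, with the odd closed walk handling parity for $\ell \in [k_0+1,k]$). The only points left implicit — prescribing a large enough lower bound on the number of clusters so that $n'$ exceeds the threshold in the hypothesis, and the routine length-flexibility of paths inside regular pairs — are standard and do not affect correctness.
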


	Finally, we turn to the proof of \Cref{thm:main-general}, which follows easily from \Cref{lem:figaj-luczak-general,thm:matching-general}.

	\begin{proof}[Proof of \Cref{thm:main-general}] 
		Let $\eps > 0$; without loss of generality, we assume that $\eps$ is sufficiently small. Let $\delta > 0$ be sufficiently small.
		Denote $\G_n = F((\beta_1 + \eps)n, \ldots, (\beta_s + \eps)n)$ and $\G_n' = F((\beta_1 + \eps/4)n, \ldots, (\beta_s + \eps/4)n)$.
		By the assumption of \Cref{thm:main-general}, for large enough $n$, every $k$-colouring of $\G_n'$ has an $\ell$-coloured connected $2$-matching on at least $\alpha_{\ell} n$ vertices, which is contained in a non-bipartite $\ell$-coloured component if $\ell \in [k_0 + 1, k]$. 
		
		 By \Cref{lem:figaj-luczak-general}, it suffices to show that, for large $n$, if $G$ is a $k$-colouring of a spanning subgraph of $\G_n$ such that $e(G) \ge (1 - \delta) e(\G_n)$, then $G$ contains an $\ell$-coloured connected matching on at least $\alpha_{\ell} n$ vertices, which is contained in an $\ell$-coloured non-bipartite component if $\ell \in [k_0+1, k]$. Suppose this is not the case and fix suitable large $n$ and graph $G$. 

		Let $W$ be the set of vertices $w$ in $G$ with $d_G(w) \le d_{\G_n}(w) - \sqrt{\delta} n$. As $|E(\G_n) \setminus E(G)| \le \delta |\G_n|^2/2 \le \delta (2s\beta n)^2/2$, where $\beta = \max\{\beta_1, \ldots, \beta_s\}$ (using that $\eps$ is small), we have $|W| \le \frac{2}{\sqrt{\delta}n} \cdot |E(\G_n) \setminus E(\G_n')| \le (\eps/2) n$ (using that $\delta$ is small). Define $G' = G \setminus W$. By \Cref{thm:matching-general} (with parameters $\eps' = \eps / 4$, $\beta_i' = \beta_i + \eps/4$ for $i \in [s]$, $\delta' = \sqrt{\delta}$), there is a $k$-colouring of $\G_n'$ with no $\ell$-coloured connected matching on at least $\alpha_{\ell} n$ vertices, which is contained in a non-bipartite $\ell$-coloured component if $\ell \in [k_0 + 1, k]$, a contradiction to the assumption on $\G_n'$.
	\end{proof}

	\subsection{Stability} \label{subsec:stability}

		It is often the case that in order to determine the Ramsey number of paths or cycles precisely, one should prove a \emph{stability} result: either an edge-coloured graph contains a large monochromatic connected matching, or it has a special structure. Our method is handy also for proving stability results.

		Suppose that we are given an edge-coloured graph $H$ where we wish to find a monochromatic path or cycle of given length. 
		Apply the regularity lemma and let $G$ be the so-called \emph{reduced graph} (whose vertices represent the clusters in the regular partition, and edges represent regular pairs and are coloured suitably: e.g.\ by a majority colour, or by all colours with large enough density in the pair). {\L}uczak's method implies that a monochromatic connected ($2$-)matching in $G$ on $\alpha |G|$ vertices can be lifted to a monochromatic cycle in $H$ (of the same colour) on at least $(\alpha - \eps)|H|$ vertices (where $\eps > 0$ can be arbitrarily small). Thus, a stability result would tell us that either there is a suitably long monochromatic path or cycle in $H$, or $G$ has a certain special structure, from which it follows that $H$ has a special structure. In the latter case, the required monochromatic path or cycle can be found `by hand' by looking at the structure of $H$ more closely.

		If the original graph $H$ is a complete graph, then the reduced graph $G$ is almost complete, and similarly if $H$ is a blow-up of $F$ then $G$ is an almost blow-up of $F$. Assuming that $G$ does not contain a large enough monochromatic connected ($2$-)matching, by applying \Cref{thm:matching-general}, we obtain a slightly smaller complete graph (or blow-up of $F$) $G'$ without a large enough monochromatic connected ($2$-)matching. Moreover, $G'$ is very similar to $G$, meaning that $E(G) \triangle E(G')$ is small. Given a stability result for complete graphs (or blow-ups of $F$), it follows that $G'$ has a special structure, which implies that $G$ has a similar structure, as required.
		This is illustrated at the end of \Cref{sec:three-colours}.

\section{The Gallai-Edmonds decomposition} \label{sec:prelims}

	We shall use the \emph{Gallai-Edmonds decomposition} (which we abbreviate to \emph{GE-decomposition}, following \cite{balogh-et-al}) of a graph $G$, defined next.

	\begin{defn} \label{def:ge}
		In a graph $G$, let $B$ be the set of vertices that are covered by every maximum matching in $G$, let $A$ be the set of vertices in $B$ that have a neighbour outside of $B$, and set $C := B \setminus A$ and $D := V(G) \setminus B$. The \emph{GE-decomposition} of $G$ is the partition $\{A, C, D\}$ of $V(G)$.
	\end{defn}	

	The following theorem, due to Edmonds and Gallai, lists useful properties of the GE-decomposition.
	Recall that a graph $H$ is called \emph{factor-critical} if for every vertex $u$ in $H$ the graph $H \setminus \{u\}$  has a perfect matching.

	\begin{thm}[Edmonds \cite{edmonds} and Gallai \cite{gallai}; see also Theorem 3.2.1 in \cite{lovasz-plummer}] \label{thm:ge}
		Let $\{A, C, D\}$ be the GE-decomposition of a graph $G$, as given in \Cref{def:ge}. Then
		\begin{enumerate} [label = \rm(M\arabic*)]
			\item \label{itm:ge-AC}
				Every maximum matching in $G$ covers $C$ and matches $A$ into distinct components of $G[D]$.
			\item \label{itm:ge-factor-critical}
				Every component of $G[D]$ is factor-critical. 
		\end{enumerate}
	\end{thm}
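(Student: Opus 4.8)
The plan is to reduce both statements to the case where $G$ has no perfect matching and $D \ne \emptyset$ (otherwise $A = D = \emptyset$ and there is nothing to prove), and then to establish the structural claims via a careful analysis of how maximum matchings interact with the deficiency version of the König–Ore formula. First I would recall (or prove in a line) the \emph{deficiency formula}: the number of vertices left uncovered by a maximum matching equals $\max_{S \subseteq V(G)} \big( q(G - S) - |S| \big)$, where $q(H)$ denotes the number of odd components of $H$; this is Berge's formula, itself an easy consequence of the Tutte–Berge theorem, which I may assume or derive. The key object is a \emph{maximiser} $S$ in this formula. The heart of the argument is to show that one can take $S = A$, i.e.\ that $A$ itself is an optimal Tutte set: every component of $G[D]$ is odd and factor-critical, every maximum matching covers all of $C$, and matches each vertex of $A$ to a distinct component of $G[D]$.

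The main steps, in order, would be: (1) \textbf{$D$ is exactly the set of vertices missed by some maximum matching.} Show that if a vertex $v$ is uncovered by some maximum matching then so is every vertex reachable from $v$ by an alternating path of even length, and that this ``reachability'' relation partitions the missed vertices into the vertex sets of the components of $G[D]$; this gives that each component of $G[D]$ is factor-critical (every vertex of it is missed by a maximum matching, and deleting it leaves a perfect matching on the rest of the component), proving \ref{itm:ge-factor-critical}. (2) \textbf{$A$ is a Tutte maximiser.} Using factor-criticality of the $D$-components and the definition of $B$, count: a maximum matching covers all of $C$, matches $A$ into distinct $D$-components (here one invokes a Hall-type / defect argument on the bipartite ``incidence'' graph between $A$ and the components of $G[D]$, using that $B$-vertices are always covered), and leaves exactly $c(G[D]) - |A|$ vertices uncovered, where $c(G[D])$ is the number of components of $G[D]$. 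This simultaneously yields \ref{itm:ge-AC}. (3) \textbf{Conclude.} Assemble (1) and (2): \ref{itm:ge-factor-critical} is step (1); \ref{itm:ge-AC} is step (2).

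The step I expect to be the main obstacle is (2), specifically showing that a maximum matching must match $A$ \emph{injectively} into the components of $G[D]$ and cannot ``waste'' an edge inside $G[A \cup C]$ or send two vertices of $A$ into the same component. This is where the optimality of the matching and the deficiency formula must be combined: if some maximum matching $M$ matched two vertices $a, a' \in A$ into the same $D$-component $K$, or matched some $a \in A$ within $B$, one augments along an alternating path using the factor-criticality of the $D$-components to exhibit a matching missing fewer vertices, a contradiction. The bookkeeping — making sure the alternating paths exist and that the parity works out so the augmentation genuinely decreases the deficiency — is the delicate part; everything else is routine. In a paper of this kind, though, \Cref{thm:ge} is quoted from \cite{edmonds,gallai} (see also Theorem 3.2.1 in \cite{lovasz-plummer}), so rather than carrying out this argument in full I would simply cite those sources and record the two consequences \ref{itm:ge-AC} and \ref{itm:ge-factor-critical} in the form needed for the later proofs.
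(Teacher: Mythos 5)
The paper does not prove this theorem at all — it is quoted as a classical result of Gallai and Edmonds (see Theorem 3.2.1 in \cite{lovasz-plummer}) — so your closing move, recording \ref{itm:ge-AC} and \ref{itm:ge-factor-critical} and citing those sources, is exactly what the paper does, and your sketch is the standard textbook argument (Berge's deficiency formula, factor-criticality of the components of $G[D]$ via alternating paths, and the defect/Hall-type argument showing $A$ is an optimal Tutte set). The only small remark is that under \Cref{def:ge} the first half of your step (1) is immediate, since $D = V(G) \setminus B$ is by definition the set of vertices missed by some maximum matching; the genuine content there is only the factor-criticality claim.
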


	The next lemma will be very useful in our proofs; here a complete blow-up of a graph $F$ is the blow-up of $F$ obtained by replacing each vertex of $F$ be a (possibly empty) complete graph.

	\begin{lem} \label{lem:ge}
		Let $G$ be a maximal graph on $n$ vertices without a matching of size $m$. Then $G$ is a complete blow-up of a star.
	\end{lem}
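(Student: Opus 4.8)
The plan is to use the Gallai-Edmonds decomposition of $G$, together with the maximality hypothesis, to pin down the structure of $G$. Write $\{A, C, D\}$ for the GE-decomposition of $G$ and let $B = A \cup C$ be the set of vertices covered by every maximum matching. Since $G$ has no matching of size $m$, the maximum matching size $\nu(G)$ is at most $m - 1$.

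First I would show that $G[B]$ is complete and that every vertex of $B$ is adjacent to every vertex of $G$. Indeed, by maximality, if we add any missing edge $uv$ to $G$ we create a matching of size $m$; such a matching must use the edge $uv$, so $G$ has a matching of size $m - 1$ missing both $u$ and $v$, i.e.\ both $u$ and $v$ lie in $D = V(G) \setminus B$. Contrapositively, any non-edge of $G$ has both endpoints in $D$: every vertex of $B$ is adjacent to everything, and in particular $B$ is a clique. Next I would argue that $D$ is an independent set. By \ref{itm:ge-factor-critical}, each component of $G[D]$ is factor-critical; a factor-critical graph on $t$ vertices has a matching of size $(t-1)/2$, and by \ref{itm:ge-AC} a maximum matching of $G$ matches $A$ into distinct components of $G[D]$ and covers $C$. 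If some component of $G[D]$ had $t \ge 3$ vertices (the only other possibility, since $t$ is odd and $t = 1$ gives an isolated vertex of $G[D]$), then the same argument as above applies to a non-edge inside that component: a non-edge $uv$ of a factor-critical graph on $\ge 3$ vertices can be added to a near-perfect matching missing a third vertex to... — more cleanly, I would instead observe directly that if $G[D]$ has a component $K$ with $|K| \ge 3$, pick a non-edge $uv$ in $K$ (it exists unless $K$ is a clique, and an odd clique on $\ge 3$ vertices is also fine to rule out), add it, get a size-$m$ matching through $uv$ hence a size-$(m-1)$ matching of $G$ avoiding $u, v$; combined with \ref{itm:ge-AC} this contradicts $u, v$ lying in $D$... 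Let me restate the clean version: every non-edge of $G$ lies in $D$, so $D$ spans at least one edge only if $D$ is not independent — but if $D$ spanned an edge $xy$ we could take the near-perfect matching of the component containing $xy$ through $xy$, extend into $A$ and over $C$, and also the near-perfect matching missing $x$; comparing sizes shows $x$ is covered by every maximum matching, contradicting $x \in D$. Hence $D$ is independent, and since non-edges live only in $D$, also $A = \emptyset$: a vertex of $A$ has a neighbour in $D$ and $D$-vertices have all their neighbours in $B$, which is consistent, but $A$ being matched into distinct components of $G[D]$ (now singletons) together with $C$ being a clique forces $G$ to be $K_{B}$ joined completely to the independent set $D$ — that is exactly a complete blow-up of a star, with $B$ forming the centre clique and $D$ the leaves (each leaf a single vertex).

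To finish, I would package this: $V(G) = B \cup D$ with $B$ a clique, $D$ independent, and every $B$-vertex adjacent to every vertex of $G$. This is precisely the complete blow-up of a star $K_{1, |D|}$ where the centre is blown up to a clique of size $|B|$ and each leaf stays a single vertex (the degenerate cases $|B| = 0$, where $G$ is an independent set $= $ blow-up of a single vertex, or $|D| \le 1$, are also complete blow-ups of a star and cause no trouble).

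The main obstacle is the middle step — ruling out large components of $G[D]$ — where one must use factor-criticality \ref{itm:ge-factor-critical} carefully: a non-edge inside a factor-critical component is genuinely a non-edge of $G$, yet the argument "add it to create a size-$m$ matching" needs the resulting size-$(m-1)$ matching of $G$ to avoid \emph{both} endpoints, which requires checking that a factor-critical graph on $\ge 3$ vertices with a chosen non-edge $uv$ has a matching covering all but $u$ and $v$ (equivalently, $G[D]$-component minus $\{u,v\}$ has a perfect matching); this follows from factor-criticality of the component minus $u$ being... one shows a factor-critical graph minus any two vertices has a perfect matching iff it has at least one edge after deletion, which can fail, so the correct route is the comparison-of-matchings argument sketched above rather than the edge-addition argument. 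I expect this to be the only delicate point; everything else is a direct consequence of \Cref{thm:ge} and maximality.
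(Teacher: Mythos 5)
Your overall strategy (Gallai--Edmonds decomposition plus maximality) is the same as the paper's, and your first step is correct and worth keeping: if $uv$ is a non-edge of a maximal $G$, then $G+uv$ has a matching of size $m$, necessarily through $uv$, so $G$ has a matching of size $m-1$ (hence a maximum matching) missing both $u$ and $v$; thus every non-edge has both ends in $D$, and every vertex of $B$ is joined to all other vertices. The gap is the next step: the claim that $D$ is independent --- equivalently that $G$ is a clique completely joined to an independent set --- is simply false, not a delicate point that a better argument will rescue. Take $m=2$ and $G=K_3\cup K_1$ (more generally, $K_{2m-1}$ together with isolated vertices): this graph is maximal without a matching of size $m$, yet $B=\emptyset$, $D=V(G)$, and $G[D]$ has a component which is a triangle; $G$ is not of your claimed form. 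It is still a complete blow-up of a star only because the definition allows the leaves to be blown up to cliques (and the centre to be empty) --- exactly the possibility your argument tries to exclude. Your ``comparison-of-matchings'' justification does not work: exhibiting one maximum matching that covers $x$ (via a near-perfect matching of its component through the edge $xy$) proves nothing, since $x\in D$ only asserts that \emph{some} maximum matching misses $x$, which is perfectly compatible with $G[D]$ containing edges; and, as you noticed yourself, the edge-addition variant inside a component merely re-derives $u,v\in D$. The later assertion that $A=\emptyset$ is also unjustified (and unnecessary).

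The paper's proof sidesteps all of this. From the GE-decomposition it defines the supergraph $H$ on $V(G)$ whose edges are all pairs touching $A$ together with all pairs inside $C$ or inside a single component $D_1,\ldots,D_r$ of $G[D]$; this $H$ is by construction a complete blow-up of a star with centre $A$. By \ref{itm:ge-AC} and \ref{itm:ge-factor-critical}, every maximum matching of $G$ misses exactly $r-|A|$ vertices, while any matching of $H$ misses at least $r-|A|$ vertices of $D$ (each $D_i$ is odd, so it either leaves a vertex uncovered or sends a matching edge to $A$). Hence $\nu(H)=\nu(G)<m$, and maximality forces $G=H$. If you want to salvage your more ``local'' approach, the correct targets are that each component of $G[D]$ induces a clique and that every pair meeting $A$ is an edge --- not that $D$ is independent; as it stands, your argument would prove a stronger statement that the examples above refute, so it cannot be completed.
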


	\begin{proof} 
		Let $\{A, C, D\}$ be the GE-decomposition of $G$, and let $D_1, \ldots, D_r$ be the vertex sets of the components of $G[D]$. Define $H$ to be the graph with vertex set $V(G)$ whose edges are all pairs of vertices that either touch $A$ or are contained in one of the sets $C, D_1, \ldots, D_r$. Then $G \subseteq H$ and $H$ is a complete blow-up of a star on $r+2$ vertices. 

		Note that by \Cref{thm:ge}, every maximum matching $M$ of $G$ covers exactly $n - (r - |A|)$ vertices (as $M$ consists of a perfect matching in $G[C]$, a matching in $G[D]$ that covers all but exactly one of the vertces in $D_i$ for $i \in [r]$, and a matching between $A$ and $D$ that covers $A$). Now consider a matching $M'$ in $H$. Because each $D_i$ is odd (by \ref{itm:ge-factor-critical}), either $M'$ leaves one vertex of $D_i$ uncovered, or it contains at least one edge between $D_i$ and $A$. It follows that at least $r - |A|$ vertices of $D$ are uncovered by $M'$, and so $M'$ covers at most $n - (r - |A|)$ vertices. As $G \subseteq H$, it follows that $G$ and $H$ have the same matching number, so by maximality of $G$, we have $G = H$. This completes the proof of \Cref{lem:ge}, since $H$ is a complete blow-up of a star.
	\end{proof}

	Recall that a \emph{$2$-matching} is a collection of vertex-disjoint edges and odd cycles and its \emph{order} is the number of vertices it covers.
	Here is a result of Pulleyblank \cite{pulleyblank} which provides a variant of \Cref{thm:ge} for $2$-matchings (this statement is a simplified version of Pulleyblank's result, mirroring Theorem 4.1 in \cite{debiasio-lo}).

	\begin{thm} \label{thm:pulleyblank}
		Let $D$ be the set of vertices in a graph $G$ that are left uncovered by at least one maximum matching in $G$, and let $D'$ be the set of vertices in $D$ that are isolated in $G[D]$. If $M$ is a maximum $2$-matching in $G$ for which the number of vertices contained in odd cycles in minimised, then $M$ covers $V(G) \setminus D'$ and the edges of $M$ incident with vertices in $D'$ induce a matching that covers $N(D')$ (the neighbourhood of $D'$). 
	\end{thm}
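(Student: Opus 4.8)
The plan is to deduce everything from the Gallai--Edmonds decomposition $\{A,C,D\}$ of $G$, observing first that its $D$ coincides with the set in the statement: it is exactly $V(G)$ minus the vertices covered by every maximum matching. Writing $n = |V(G)|$, I would begin by collecting the facts I need from \Cref{thm:ge}: $G[C]$ has a perfect matching, there are no $C$--$D$ edges (so every vertex of $D$ has all its neighbours in $A \cup D$), each component of $G[D]$ is factor-critical, and consequently $N(D') \subseteq A$ (a vertex of $D'$ is isolated in $G[D]$, so none of its neighbours lies in $D$, and none can lie in $C$). I would also record the elementary fact that a factor-critical graph on $\ge 3$ vertices has a spanning $2$-matching consisting of a single odd cycle together with a perfect matching of the remaining vertices --- take its odd-ear decomposition, keep the initial odd cycle, and along each later odd ear match its (even-sized) set of internal vertices consecutively --- whereas a single vertex admits no $2$-matching covering it.

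Next I would prove an upper bound by a charging argument. Let $M$ be any $2$-matching. Since $D'$ is independent, a vertex of $D'$ covered by an edge of $M$ is matched to a vertex of $N(D')$, and a vertex of $D'$ lying on an odd cycle $C$ of $M$ has both of its two $C$-neighbours in $N(D')$; moreover the arcs of $C$ strictly between consecutive $D'$-vertices are internally vertex-disjoint and each contains at least one vertex of $N(D')$. Charging each covered $D'$-vertex to the vertices of $N(D')$ it thereby forces $M$ to use, distinct $D'$-vertices are charged to disjoint subsets of $N(D')$, so $M$ covers at most $|N(D')|$ vertices of $D'$ (in fact at most the maximum size of a matching between $D'$ and $N(D')$), and hence the order of $M$ is at most $n-|D'|+|N(D')|$. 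The key point is that in the extremal case each of these arcs must consist of a single vertex, which would force an odd cycle through a $D'$-vertex to alternate between $D'$ and $N(D')$ and hence be even --- a contradiction; so in an extremal $M$ no vertex of $D'$ lies on a cycle.

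Then I would match the bound by a construction. Start from a perfect matching of $G[C]$; take a matching $M_0$ of $G$ that saturates $A$ into distinct components of $G[D]$ and is chosen so that its restriction between $N(D')$ and $D'$ is as large as possible, extending a maximal $N(D')$--$D'$ matching to such an $M_0$ using the strong-Hall behaviour of $A$ towards the components of $G[D]$. For every component $D_i$ of $G[D]$ met by $M_0$ in a vertex $v_i$ add a perfect matching of the even, factor-critical graph $D_i - v_i$; for every non-singleton component not met by $M_0$ add a spanning $2$-matching as above. The resulting $2$-matching misses only the singleton components of $G[D]$ not met by $M_0$, so it attains the bound, is therefore maximum, and the choices can be arranged so that it has the fewest possible vertices on odd cycles. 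Finally I would read off the statement: for a maximum $2$-matching $M$ with the fewest vertices on odd cycles, the bound of the second paragraph is met with equality, which forces $M$ to cover all of $V(G)\setminus D'$, forces every covered $D'$-vertex to lie on an edge of $M$ (no $D'$-vertex on a cycle), and forces the $N(D')$-endpoints of these edges to exhaust $N(D')$ --- so the $M$-edges meeting $D'$ form a matching covering $N(D')$.

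The genuine obstacle is the extension step in the construction --- equivalently, proving that the maximum order of a $2$-matching really equals $n$ minus the ``$D'$-deficiency'' $\max_{Z\subseteq D'}(|Z|-|N(Z)|)$. This is a fractional ($2$-matching) refinement of the Gallai--Edmonds theorem, and obtaining it cleanly seems to require either the precise strong-Hall part of \Cref{thm:ge} for $A$ versus the components of $G[D]$, or a short separate argument --- for instance, passing to the bipartite double cover of $G$, where maximum $2$-matchings correspond to maximum matchings and König/Hall apply directly, and then translating the structure back. By contrast, the charging argument is routine once the parity obstruction for cycles through $D'$ is isolated, and reading off the final conclusion from equality in the bound is then immediate.
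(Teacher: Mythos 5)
The paper itself gives no proof of this statement---it is quoted from Pulleyblank, following Theorem~4.1 of \cite{debiasio-lo}---so I can only judge your argument on its own terms. Your skeleton (the Gallai--Edmonds decomposition, $N(D')\subseteq A$, the fact that a factor-critical component on at least $3$ vertices has a spanning $2$-matching consisting of one odd cycle plus a perfect matching, a charging upper bound, and a matching construction) is the natural route, and the charging bound is fine. But the final ``read off'' step does not follow from your bookkeeping, and in fact cannot: as you yourself note, the charging shows that a $2$-matching covers at most $\nu$ vertices of $D'$, where $\nu$ is the maximum size of a matching between $D'$ and $N(D')$, and your construction attains order $n-|D'|+\nu$; equality therefore only forces $\nu$ vertices of $N(D')$ to be met by edges from $D'$, not all of $N(D')$. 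When $\nu<|N(D')|$ the stated conclusion is unattainable. Concretely, take a triangle $ua_1a_2$, attach a triangle to $a_1$ (through $t_1$) and a triangle to $a_2$ (through $s_1$): here $D'=\{u\}$, $N(D')=\{a_1,a_2\}$, every maximum $2$-matching has order $9$, and any one with fewest odd-cycle vertices contains exactly one edge at $u$, covering only one of $a_1,a_2$. So the theorem in this literal form carries an implicit Hall-type hypothesis (that $N(D')$ can be saturated by a matching into $D'$, as holds where the paper applies it); your write-up needs to isolate that hypothesis rather than claim that ``equality forces the $N(D')$-endpoints to exhaust $N(D')$''.

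Two further gaps. First, your parity step---``in the extremal case each arc between consecutive $D'$-vertices is a single vertex, so no $D'$-vertex lies on a cycle''---is not justified by equality in your bound: in the same example the $2$-matching consisting of the three triangles is maximum (order $9$, equality) and yet $u\in D'$ lies on an odd cycle. Excluding $D'$-vertices from cycles genuinely requires either the hypothesis that the number of vertices on odd cycles is minimised (which your argument never invokes, and which would need an exchange argument), or, under the Hall hypothesis $\nu=|N(D')|$, the sharper observation that every vertex of $N(D')$ must absorb a charge while an $N(D')$-vertex lying on a cycle through $D'$ can only be charged from within that cycle, which forces the cycle to alternate between $D'$ and $N(D')$ and hence be even. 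Second, the lower-bound construction---an $A$-saturating matching into distinct components of $G[D]$, as in \Cref{thm:ge}\ref{itm:ge-AC}, that simultaneously hits a maximum number of singleton components---is precisely the crux, and you leave it open; it does not follow from \ref{itm:ge-AC} alone, though it can be completed by a standard alternating-path (Mendelsohn--Dulmage type) exchange combining an $A$-saturating matching with a maximum $D'$--$N(D')$ matching, or by your suggested detour through the bipartite double cover. Until these points are repaired and the implicit hypothesis is made explicit, the proposal is not a proof.
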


	Here is a version of \Cref{lem:ge} for $2$-matchings.

	\begin{lem} \label{lem:ge-$2$}
		Let $G$ be a maximal graph on $n$ vertices without a $2$-matching on $m$ vertices. Then $G$ is a blow-up of a star where all but at most one leaf are replaced by a single vertex.
	\end{lem}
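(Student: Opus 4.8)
The plan is to follow the template of the proof of \Cref{lem:ge}, with Pulleyblank's theorem (\Cref{thm:pulleyblank}) playing the role of the Gallai--Edmonds decomposition. Write $\nu_2(G)$ for the maximum order of a $2$-matching in $G$; the hypothesis says $\nu_2(G)\le m-1$ while $G$ is edge-maximal with this property (adding any edge raises $\nu_2$ to at least $m$). Let $D$ be the set of vertices of $G$ missed by some maximum matching, let $D'\subseteq D$ be those that are isolated in $G[D]$, and set $S:=N_G(D')$. Since every vertex of $D'$ has no neighbour inside $D$, the set $D'$ is independent in $G$ and $S\cap D'=\emptyset$, with $S\subseteq V(G)\setminus D$.

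First I would determine $\nu_2(G)$ exactly. By \Cref{thm:pulleyblank} there is a maximum $2$-matching $M$ that covers all of $V(G)\setminus D'$ and whose edges meeting $D'$ form a matching covering $S=N(D')$. Because $D'$ is independent, each such edge has one endpoint in $D'$ and one in $S$; as this family is a matching covering all of $S$, it has exactly $|S|$ edges and covers exactly $|S|$ vertices of $D'$, and no vertex of $D'$ lies on an odd cycle of $M$. Hence $M$ misses exactly $|D'|-|S|$ vertices, so $\nu_2(G)=n-|D'|+|S|$, and in particular $n-|D'|+|S|\le m-1$.

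Next I would introduce the candidate graph $H$ on $V(G)$: make $V(G)\setminus D'$ a clique and join every vertex of $D'$ to every vertex of $S$ and to nothing else. Then $G\subseteq H$: an edge of $G$ with both ends outside $D'$ lies in the clique, an edge with an end $v\in D'$ has its other end in $N_G(v)\subseteq S$, and $G$ has no edge inside $D'$. The crux is to show $\nu_2(H)\le m-1$. Given a $2$-matching $M'$ of $H$, I would bound the number of vertices of $D'$ it covers by arguing that every piece (matching edge or odd cycle) of $M'$ meeting $D'$ in $j\ge 1$ vertices meets $S$ in at least $j$ vertices: for a matching edge this is clear since the neighbourhood in $H$ of a $D'$-vertex is exactly $S$; for an odd cycle, the $D'$-vertices on it are pairwise non-adjacent and have all their cycle-neighbours in $S$, so, orienting the cycle, the successors of its $D'$-vertices are $j$ distinct vertices of $S$. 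As the pieces of $M'$ are vertex-disjoint, at most $|S|$ vertices of $D'$ are covered, so $M'$ has order at most $n-|D'|+|S|\le m-1$; thus $\nu_2(H)\le m-1$. By maximality of $G$ this forces $G=H$.

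Finally I would read off the structure of $H$: the only edges of $H$ run between $S$ and each of the sets $V(G)\setminus D'\setminus S$ and $\{v\}$ for $v\in D'$, so $H$ is the blow-up of a star with centre class $S$ (a clique), one leaf class $V(G)\setminus D'\setminus S$ (a clique), and the remaining leaf classes the singletons $\{v\}$, $v\in D'$; that is, a blow-up of a star all of whose leaves bar at most one are single vertices, as claimed. The step I expect to be the main obstacle is the bound $\nu_2(H)\le m-1$, precisely the combinatorial accounting of how much of $D'$ an arbitrary $2$-matching of $H$ can cover once odd cycles are in play (the matching-only version is what appears in \Cref{lem:ge}); the fully degenerate cases, such as $S=\emptyset$ or $S=V(G)\setminus D'$, are absorbed by allowing blow-up classes to be empty.
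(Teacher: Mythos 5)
Your proposal is correct and takes essentially the same route as the paper: apply \Cref{thm:pulleyblank}, form the same graph $H$ (clique on $V(G)\setminus D'$ plus the complete bipartite graph between $D'$ and $N(D')$), compute that the maximum $2$-matching order in both $G$ and $H$ is $n-|D'|+|N(D')|$, and conclude $G=H$ by maximality. Your successor-on-the-cycle counting simply fills in the step the paper dismisses as ``easy to see,'' so there is nothing to add.
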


	\begin{proof}
		Let $D'$ be as in the statement of \Cref{thm:pulleyblank}, let $A' = N(D')$, and let $C' = V(G) \setminus (A' \cup D')$. Let $H$ be the graph on $V(G)$ whose edges are pairs of vertices that either touch $A'$ or are contained in $C'$. Then $G \subseteq H$ and $H$ is a complete blow-up of a star, where all but at most one leaf are replaced by a single vertex. By \Cref{thm:pulleyblank}, every maximum $2$-matching in $G$ covers exactly $n - (|D'| - |A'|)$ vertices. It is easy to see that the same can be said for $H$, and thus by maximality of $G$ we have $G = H$, completing the proof.
	\end{proof}

	Here is another consequence of \Cref{thm:ge}, which we will use in \Cref{sec:three-colours}.

	\begin{cor} \label{cor:ge-remove-vx}
		Let $G$ be a connected graph on $n \ge 2m+2$ vertices that does not have a matching of size $m+1$. Then there exists a vertex $u$ such that $G \setminus \{u\}$ does not have a matching of size $m$. 
	\end{cor}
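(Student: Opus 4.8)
The plan is to apply the Gallai-Edmonds decomposition to $G$ and argue that removing an appropriately chosen vertex drops the matching number by exactly one. Let $\{A, C, D\}$ be the GE-decomposition of $G$ and let $D_1, \ldots, D_r$ be the vertex sets of the components of $G[D]$. By \Cref{thm:ge}\ref{itm:ge-AC}, every maximum matching of $G$ covers $C$, matches $A$ into distinct components of $G[D]$, and (using \ref{itm:ge-factor-critical}) covers all but one vertex in each $D_i$; hence the matching number of $G$ is $\nu(G) = \tfrac{1}{2}\big(|C| + |A| + (|D| - r)\big) = \tfrac{1}{2}(n - (r - |A|))$. Since $\nu(G) \le m$ and $n \ge 2m+2$, we get $r - |A| = n - 2\nu(G) \ge n - 2m \ge 2$, so in particular $r \ge 2$ and $D \neq \emptyset$.

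First I would handle the case $A \neq \emptyset$. Pick any vertex $a \in A$. I claim $G \setminus \{a\}$ has no matching of size $m \ge \nu(G)$; more precisely I'll show $\nu(G \setminus \{a\}) \le \nu(G) - 1$, which suffices. Indeed, the GE-decomposition of $G \setminus \{a\}$ can be analysed directly: removing $a$ from the "center set" $A$ can only increase the number of deficient components (each $D_i$ still contributes a deficiency, and now there is one fewer vertex in $A$ to absorb it), so the deficiency $\mathrm{def}(G \setminus \{a\}) = (n-1) - 2\nu(G \setminus \{a\})$ is at least $\mathrm{def}(G) + 1 = (r - |A|) + 1$... wait, one must be careful since removing $a$ changes $n$ by one; cleaner is the Berge–Tutte formula: $\mathrm{def}(G) = \max_{S \subseteq V(G)} (\text{odd components of } G \setminus S) - |S|$, with $S = A$ attaining the maximum for $G$ (the $D_i$ are odd). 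For $G \setminus \{a\}$, take $S = A \setminus \{a\}$: the components $D_1, \ldots, D_r$ are still odd components of $(G \setminus \{a\}) \setminus S$, so $\mathrm{def}(G \setminus \{a\}) \ge r - (|A| - 1) = \mathrm{def}(G) + 1$. Since $\mathrm{def}(G \setminus \{a\}) \equiv n - 1 \pmod 2$ and $\mathrm{def}(G) \equiv n \pmod 2$, this is consistent, and it gives $\nu(G \setminus \{a\}) = \tfrac{1}{2}((n-1) - \mathrm{def}(G \setminus \{a\})) \le \tfrac{1}{2}((n-1) - \mathrm{def}(G) - 1) = \nu(G) - 1 \le m - 1$, as desired.

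Now suppose $A = \emptyset$. Then $C \cup D_1 \cup \cdots \cup D_r$ are the connected components of $G$ (there are no edges leaving $C$, and no edges between distinct $D_i$ or from $D_i$ to $C$ since such edges would put a vertex into $A$). As $G$ is connected and $r \ge 2$, we must have $C = \emptyset$ and $r = 1$, contradicting $r \ge 2$. So this case does not occur, and we are done by taking $u = a$ for any $a \in A$. The main thing to get right is the bookkeeping in the $A \neq \emptyset$ case — showing that deleting a center vertex genuinely forces the deficiency up — and the quickest rigorous route is the Berge–Tutte deficiency formula with the explicit witness set $A \setminus \{a\}$, as above; everything else is a short computation with the GE parameters.
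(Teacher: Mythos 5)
Your argument is correct, but it takes a more heavy-handed route than the paper's. Both proofs rest on the Gallai--Edmonds decomposition, yet the mechanisms differ: the paper simply lets $B$ be the set of vertices covered by \emph{every} maximum matching, observes that deleting any $u \in B$ drops the matching number for free (a maximum matching of $G \setminus \{u\}$ of the same size would be a maximum matching of $G$ missing $u$), and disposes of the degenerate case $B = \emptyset$ by noting that connectivity plus \Cref{thm:ge}~\ref{itm:ge-factor-critical} then makes $G$ factor-critical, so $\nu(G) \ge (n-1)/2 > m$, contradicting the hypothesis $n \ge 2m+2$. You instead pick $a \in A$ (a special case, since $A \subseteq B$) and certify the drop in $\nu$ via the Berge--Tutte deficiency formula with witness set $A \setminus \{a\}$, and you exclude $A = \emptyset$ by combining connectivity with the count $\mathrm{def}(G) = r - |A| \ge n - 2m \ge 2$; this is where your proof, like the paper's, consumes the hypothesis $n \ge 2m+2$. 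Your route is sound, but it needs two extra standard ingredients the paper avoids quoting: the exact formula $\nu(G) = \tfrac12\bigl(n - (r - |A|)\bigr)$ (which uses the full structural form of the Gallai--Edmonds theorem, slightly beyond the literal statements \ref{itm:ge-AC}--\ref{itm:ge-factor-critical}, though the paper itself uses the same fact inside \Cref{lem:ge}) and the Berge--Tutte inequality for $G \setminus \{a\}$. What your version buys in exchange is slightly more explicit information, namely that \emph{any} vertex of $A$ works; what the paper's version buys is brevity, since the defining property of $B$ makes the deficiency computation unnecessary.
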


	\begin{proof}
		Let $B$ be the set of vertices in $G$ that are contained in every maximum matching in $G$. 
		If $B = \emptyset$, then, as $G$ is connected, \Cref{thm:ge}~\ref{itm:ge-factor-critical} implies that $G$ is factor critical, i.e.\ $G \setminus \{u\}$ has a perfect matching for every vertex $u$ in $G$. But this implies that $G \setminus \{u\}$ has a matching of size at least $(n-1)/2 > m$, a contradiction. 
		It follows that $B \neq \emptyset$. Take $u$ to be any vertex in $B$, and note that the matching number of $G \setminus \{u\}$ is smaller than the matching number of $G$, as required.
	\end{proof}

\section{Proofs} \label{sec:proofs}

	Our main aim in this section is to prove \Cref{thm:matching-general}. 
	We start with the proof of \Cref{thm:matching-complete}, a special case of \Cref{thm:matching-general}. The two theorems are proved similarly, but the proof of the former avoids some technicalities (and is sufficient for $k$-colour Ramsey numbers of paths and cycles), so we hope that including its proof first will be instructive.

	\subsection{Proof of the simplified version}

		\begin{proof}[Proof of \Cref{thm:matching-complete}]
			Recall that $G$ is a $k$-coloured graph on at least $N + \eps n$ vertices, where $N \le \beta n$; every vertex has at most $\delta n$ non-neighbours; and there is no monochromatic connected matching on at least $\alpha n$ vertices. Without loss of generality, we assume that $G$ has exactly $N + \eps n$ vertices. We may assume that $\eps > 0$ is small, and we have that $\delta$ is sufficiently small and $\beta \ge \alpha$.

			Let $G_1$ be a $k$-multicoloured graph (so edges can have more than one colour) on $V(G)$ such that $G \subseteq G_1$ (here we take the colours into account, meaning that if $e$ is an edge in $G$ of colour $c$ then $e$ has colour $c$ in $G_1$, but it might have other colours too); $G_1$ does not contain a monochromatic connected matching on at least $\alpha n$ vertices; and $G_1$ is maximal with respect to these properties. We claim that $G_1$ satisfies the following three properties.
			
			\begin{enumerate} [label = \rm(S\arabic*)]
				\item \label{itm:G1-dense}
					Every vertex in $G_1$ has at most $\delta n$ non-neighbours.
				\item \label{itm:G1-no-large-mono-cm}
					There is at most one $\ell$-coloured component on fewer than $\alpha n / 2$ vertices, for $\ell \in [k]$. It follows that there are at most $\frac{|G|}{\alpha n / 2} + 1 \le \frac{2(\beta + \eps)}{\alpha} + 1 \le \frac{4\beta}{\alpha}$ components of colour $\ell$ (here we used $\beta \ge \alpha$ and that $\eps$ is small).
				\item \label{itm:G1-star}
					Every monochromatic component $U$ is a complete blow-up of a star. 
					Denote the vertices at the centre of the star by $H(U)$ (for `head') and the remaining vertices in $U$ by $T(U)$ (for `tail').
					Note that if the star is a single vertex or edge, then $U$ is a clique. In this case any choice of $H(U)$ and $T(U)$ that partition the vertex set of $U$ is acceptable.
			\end{enumerate}
			Indeed, \ref{itm:G1-dense} follows as the same holds for $G$, and $G_1$ contains $G$ as a subgraph and has the same vertex set. To see \ref{itm:G1-no-large-mono-cm}, suppose that there are two $\ell$-coloured components $U$ and $W$, each on fewer than $\alpha n / 2$ vertices. Then adding an $\ell$-coloured edge between them would yield a graph that contains $G_1$ and does not have a monochromatic connected matching on at least $\alpha n$ vertices, for $\ell' \in [k]$, contradicting the maximality of $G_1$. Finally, \ref{itm:G1-star} follows from \Cref{lem:ge}.

			Denote by $\comp{G}_1$ the \emph{complement} of $G_1$, namely the graph on $V(G_1)$ whose edges are non-edges of $G_1$.
			\begin{claim} \label{claim:matching-complement}
				$\comp{G}_1$ does not have a matching of size larger than $\big(\frac{16\beta}{\alpha}\big)^k \cdot \delta n$.
			\end{claim}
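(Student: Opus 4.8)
The plan is to bound the matching number of $\comp{G}_1$ by controlling, for each colour $\ell$, how many edges of $\comp{G}_1$ can be "charged" to that colour, and then summing over the $k$ colours. Recall that a pair $uv$ is a non-edge of $G_1$ precisely when it receives no colour. Fix a maximum matching $\comp{M}$ in $\comp{G}_1$. For each edge $uv \in \comp{M}$ and each colour $\ell$, the fact that $uv$ is not an $\ell$-coloured edge, together with \ref{itm:G1-star}, forces some structure: if the $\ell$-coloured component $U_u \ni u$ is a complete blow-up of a star, then $u \notin H(U_u)$ or $v \notin U_u$, and similarly with $u,v$ swapped. So at least one of $u, v$ lies in the tail $T(U)$ of its own $\ell$-coloured component $U$, and the other endpoint lies outside $U$. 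The idea is to iterate this observation across all $k$ colours to "trap" both endpoints of each matching edge inside tails of small components, of which there are few by \ref{itm:G1-no-large-mono-cm}.

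**The charging argument.** First I would make precise the key local fact: if $uv \in \comp{G}_1$, then for every colour $\ell$, at least one of $u,v$ is a tail-vertex of its $\ell$-coloured component. Indeed, $u$ and $v$ lie in $\ell$-coloured components $U, U'$; if $U = U'$ then since $uv$ is a non-edge of a complete blow-up of a star, at least one of $u,v$ is in the tail $T(U)$; if $U \ne U'$, then since the blow-up of the star is complete, heads are adjacent to everything in their component — but more to the point, I want the statement that one endpoint is in a tail. Actually the cleanest version: if $u \in H(U)$ and $v \in H(U')$ with $U = U'$, then $uv$ is an edge; if $U \ne U'$, pick the colour $\ell$ and note $u \in U, v \in U'$ are in different $\ell$-components, which is fine — so in the different-component case we do get that $u,v$ can both be heads. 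I would therefore refine to: either one of $u,v$ is in a tail of its $\ell$-component, or $u,v$ lie in distinct $\ell$-coloured components. Then I build a decision tree of depth $k$: at colour $1$, split $\comp{M}$ according to which endpoint (if any) is a tail-vertex in colour $1$, removing edges whose endpoints lie in distinct colour-$1$ components into a separate bucket; recurse on colour $2$ within the surviving pieces; and so on.

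**Counting.** The point of \ref{itm:G1-no-large-mono-cm} is that colour $\ell$ has at most $4\beta/\alpha$ components, all but one of size $< \alpha n / 2$, and in particular each component has at most, say, $|G| \le (\beta + \eps)n \le 2\beta n$ vertices, so all tails together in colour $\ell$ have at most $2\beta n$ vertices — that alone is not small enough, so I need the finer structure: a matching all of whose edges are "trapped" after processing all $k$ colours must have both endpoints confined to a bounded number of small pieces. The correct bookkeeping: when we branch at colour $\ell$ and keep the edges where (say) $u$ is in tail $T(U)$ and $v \notin U$, all such $u$'s lie in one of the $\le 4\beta/\alpha$ colour-$\ell$ components, and all but one of those has $< \alpha n/2$ vertices; the edges whose $u$-endpoint lands in the one big component form a sub-instance we continue to process. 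Carefully, after $k$ rounds, any surviving matching edge has been assigned, for each colour $\ell$, a "side" and a component; the number of such assignment-patterns is at most $(2 \cdot 4\beta/\alpha)^k = (8\beta/\alpha)^k$ roughly, and within a fixed pattern the matching is confined so that its edges are covered by small sets, giving a bound like $(16\beta/\alpha)^k \cdot \delta n$ once \ref{itm:G1-dense} (every vertex has $\le \delta n$ non-neighbours, so the total number of edges of $\comp{G}_1$ incident to a single vertex is $\le \delta n$) is used to convert "confined to few vertices" into "few matching edges."

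**Main obstacle.** The delicate part is organizing the induction/branching so that the constants multiply out to exactly $(16\beta/\alpha)^k \delta n$ rather than something larger, and making sure the "distinct components" case is handled — I expect one resolves it by observing that if $u,v$ lie in distinct $\ell$-components then one of those components is small (two large components would each have $\ge \alpha n/2$ vertices, which is fine, so this needs the counting bound on the number of components, not their size), so that endpoint can be absorbed into the "confined to few small components" tally. Getting \ref{itm:G1-dense} to do the final conversion is routine: once an endpoint is pinned to lie in a union of $O((\beta/\alpha)^k)$ small components, hence to a vertex set of size $O((\beta/\alpha)^k \cdot \alpha n)= O(\beta n \cdot (\beta/\alpha)^{k-1})$ — hmm, that is again too big, so in fact the conversion must go the other way: each matching edge of $\comp{G}_1$ is a non-edge, and a fixed vertex has only $\delta n$ non-neighbours, so a matching confined to touch a set $S$ of vertices has size at most $|S| \cdot$ (nothing) — no; the right statement is that the matching edges within one "pattern class" have all their endpoints in a set where one side has $\le \alpha n$ total vertices and we then bound by $\delta n$ per... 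I would sort this out by bounding, in the base case of the recursion, a matching in $\comp{G}_1$ restricted to $U \times U$ for a single small component $U$: here $|U| < \alpha n/2$, so the matching has $< \alpha n/4$ edges — but we want $\delta n$, not $\alpha n$. So the \ref{itm:G1-dense} bound must be invoked to say each vertex of $U$ has $\le \delta n$ non-neighbours total, hence the number of $\comp{G}_1$-edges inside $U$ is at most $|U| \cdot \delta n / \ldots$, still not a clean matching bound. The resolution I anticipate: the recursion terminates not with a small component but with a \emph{single vertex} playing a head role, so the surviving matching edges all share... In short, the main work is choosing the branching so the terminal case is "all surviving matching edges are incident to a single fixed small set of heads," at which point \ref{itm:G1-dense} gives $\le \delta n$ such edges per head and the number of heads across all patterns is $\le (16\beta/\alpha)^k$. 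That combinatorial design is the crux; everything downstream is arithmetic.
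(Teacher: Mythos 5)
There is a genuine gap, and you have in fact located it yourself: the ``combinatorial design'' you defer is the heart of the proof, and the terminal case you hope for (all surviving matching edges incident to a fixed small set of heads) cannot be arranged. The problematic configuration is a matching edge $uv \in \comp{G}_1$ whose two endpoints both lie in the \emph{tail} of the \emph{same} $\ell$-coloured component $U$; this component may be the one large component of colour $\ell$, so none of your ``small component'' bookkeeping applies to it, and being a tail vertex does not prevent $\ell$-coloured edges to other tail vertices (the $\ell$-coloured edges inside $T(U)$ form a disjoint union of cliques, one per leaf of the star). Your branching has no mechanism for discarding or controlling these tail--tail edges, and your intended use of \ref{itm:G1-dense} is also off: the matching edges need not touch any head, and ``$\le \delta n$ non-neighbours per head'' does not bound a matching of non-edges spread over a large tail.

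The paper's proof starts the same way you do --- assign to each vertex a type recording, for every colour, its component and whether it is a head or tail vertex, and pigeonhole the matching onto a single pair of types $\tau,\sigma$, giving a submatching $M_0$ of size $> 4^k\delta n$ --- but then proceeds quite differently. For each colour $\ell$ in turn one keeps a quarter of the current submatching so that \emph{no} $\ell$-coloured edges remain between the $\tau$-side and the $\sigma$-side: if the two types lie in distinct $\ell$-components this is automatic; otherwise both types must be tail vertices of the same component (a head would be $\ell$-adjacent to everything in the component, contradicting that $M_0$ consists of non-edges), and since the endpoints of $M_0$ lie in distinct tail cliques, a random bipartition $\{A,B\}$ of the cliques retains each edge with probability $1/4$ while ensuring no $\ell$-edges cross from the retained $X$-side to the retained $Y$-side. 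After $k$ rounds one has sets $X_k, Y_k$ of size $> \delta n$ with no $G_1$-edges between them at all, so any vertex of $X_k$ has more than $\delta n$ non-neighbours, contradicting \ref{itm:G1-dense}. This ``delete a colour per round at the cost of a factor $4$, then contradict the degree condition with two large mutually non-adjacent sets'' step is exactly what is missing from your argument, and without it the counting cannot close.
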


			\begin{proof}
				We assign a \emph{type} $\tau(u) = (U_1, \ldots, U_k, \mu_1, \ldots, \mu_k)$ to each vertex $u$ in $G_1$, so that $U_{\ell}$ is the $\ell$-coloured component that contains $u$ (observe that there is a unique such component $U_{\ell}$, which is a single vertex when $u$ is not incident with edges of colour $\ell$), $\mu_{\ell} = H$ if $u \in H(U_{\ell})$ and $\mu_{\ell} = T$ otherwise. By \ref{itm:G1-no-large-mono-cm}, the total number of types, denoted $t$, satisfies $t \le \big( \frac{4\beta}{\alpha} \big)^k \cdot 2^k = \big(\frac{8\beta}{\alpha}\big)^k$. 

				Suppose that $M$ is a matching in $\comp{G}_1$ with $|M| > \big(\frac{16\beta}{\alpha}\big)^{2k} \cdot \delta n$. Then there exists two types $\tau = (U_1, \ldots, U_k, \mu_1, \ldots, \mu_k)$ and $\sigma = (W_1, \ldots, W_k, \nu_1, \ldots, \nu_k)$ and a submatching $M_0 \subseteq M$ such that $|M_0| > |M| / t^2 \ge 4^k \delta n$ and the edges in $M_0$ have ends of types $\tau$ and $\sigma$. Denote by $X_0$ and $Y_0$ the vertices in $V(M_0)$ of types $\tau$ and $\sigma$, respectively.

				We will find submatchings $M_0 \supseteq M_1 \supseteq \ldots \supseteq M_k$ such that $|M_{\ell}| \ge \frac{1}{4} \cdot |M_{\ell-1}|$ and there are no $\ell$-coloured edges between $X_{\ell} := X_0 \cap V(M_{\ell})$ and $Y_{\ell} := Y_0 \cap V(M_{\ell})$. Assuming such submatchings are found, then $|X_{k}| = |Y_{k}| = |M_k| \ge 4^{-k} |M_0| > \delta n$, and there are no $\ell$-coloured edges between $X_k$ and $Y_k$ for $\ell \in [k]$, implying that there are no edges of $G_1$ between $X_k$ and $Y_k$. This is a contradiction to \ref{itm:G1-dense}. It thus remains to show that such matchings $M_1, \ldots, M_k$ exist.

				Suppose that $M_0 \supseteq M_1 \supseteq \ldots \supseteq M_{\ell - 1}$, where $\ell \in [k]$, satisfy the above requirements. We will show how to obtain a suitable $M_{\ell}$. To do so, consider two cases: $U_{\ell} \neq W_{\ell}$ and $U_{\ell} = W_{\ell}$. In the former case, vertices of type $\tau$ are type $\sigma$ are contained in distinct $\ell$-coloured components. In particular, there are no $\ell$-coloured edges between $X_{\ell-1}$ and $Y_{\ell-1}$, so taking $M_{\ell} = M_{\ell-1}$ would suffice. 

				Now consider the latter case, where $U_{\ell} = W_{\ell}$, i.e.\ vertices of type $\tau$ and $\sigma$ are contained in the same $\ell$-coloured component $U := U_{\ell}$. We claim that $\mu_{\ell} = \nu_{\ell} = T$, i.e.\ all vertices of types $\tau$ and $\sigma$ are in $T(U)$. Indeed, if say $\mu_{\ell} = H$ then the vertices in $X_0$ are adjacent in colour $\ell$ to all other vertices in $U$ (as vertices in $H(U)$ are joined to all other vertices in $U$). In particular, for every $x \in X_0$ and $y \in Y_0$, the pair $xy$ is an $\ell$-coloured edge in $G_1$, contradicting the assumption that there is a perfect matching of non-edges between $X_0$ and $Y_0$.

				Recall that the $\ell$-coloured edges in $G_1[T(U)]$ form a disjoint union of cliques; denote these cliques by $K_1, \ldots, K_t$. Then all edges in $M_{\ell-1}$ have ends in distinct cliques $K_1, \ldots, K_t$, as edges in $M_{\ell-1}$ are non-edges in $G_1$. Let $\{A, B\}$ be a random partition of $[t]$, and define
				\begin{equation*}
					M_{\ell} = \left\{xy \in M_{\ell-1} :\,\, x \in X_{\ell-1} \cap \bigcup_{i \in [A]}K_i \text{ and } y \in Y_{\ell-1} \cap \bigcup_{i \in [B]} K_i\right\}.
				\end{equation*}
				Note that for every edge $e \in M_{\ell-1}$, the probability that $e$ is in $M_{\ell}$ is $1/4$. Indeed, if $e = xy$, where $x \in X_{\ell-1} \cap K_i$ and $y \in Y_{\ell-1} \cap K_j$ (so $i$ and $j$ are distinct elements in $[k]$), then $\Pr[e \in M_{\ell}] = \Pr[i \in A] \cdot \Pr[j \in B] = 1/4$. It follows that $\Ex[|M_{\ell}|] = \frac{1}{4} \cdot |M_{\ell-1}|$, and so there is a choice of $A$ and $B$ such that $|M_{\ell}| \ge \frac{1}{4} \cdot |M_{\ell-1}|$. Take such an $M_{\ell}$, and observe that there are no $\ell$-coloured edges between $X_{\ell}$ and $Y_{\ell}$. Thus $M_{\ell}$ satisfies the requirements, completing the proof of the claim.
			\end{proof}

			Let $M$ be a maximum matching in $\comp{G}_1$. By \Cref{claim:matching-complement}, $|M| \le \big(\frac{16 \beta}{\alpha}\big)^{2k} \cdot \delta n = (\eps / 2) \cdot n$. Consider the graph $G_2 = G_1 \setminus V(M)$. This is a $k$-multicoloured complete graph, as the existence of a non-edge in $G_2$ would imply the existence of a larger matching in $\comp{G}_1$ than $M$, contrary to the choice of $M$. By the upper bound on the size of $M$, we have $|G_2| \ge |G_1| - 2|M| \ge |G_1| - \eps n \ge N$. \Cref{thm:matching-complete} follows by taking $G'$ to be any induced subgraph of $G_2$ on $N$ vertices.
		\end{proof}

	\subsection{Proof of the general version}
		We now prove \Cref{thm:matching-general}. The proof is similar to the above proof, so we allow ourselves to skip some details.
		The statement of \Cref{thm:matching-general}, as well as its proof, hold for both matchings and $2$-matchings simultaneously.

		\begin{proof}
			Recall that $F$ is a graph on vertex set $[s]$ (possibly with loops) and $\G = F(N_1 + \eps n, \ldots, N_s + \eps n)$ (where $N_1, \ldots, N_s \le \beta n$). Recall also that $G$ is a $k$-coloured spanning subgraph of $\G$ such that 
			\begin{enumerate} [label = \rm(P\arabic*)]
				\item \label{itm:G-deg}
					$d_G(u) \ge d_{\G}(u) - \delta n$ for every vertex $u$ in $G$,
				\item \label{itm:G-first-colours}
					there is no $\ell$-coloured ($2$-)matching on at least $\alpha_{\ell}n$ vertices,
				\item \label{itm:G-last-colours}
					there is no $\ell$-coloured ($2$-)matching on at least $\alpha_{\ell}n$ vertices which is contained in a non-bipartite $\ell$-coloured component for $\ell \in [k_0 + 1, k]$.
			\end{enumerate}

			Let $G_1$ be a $k$-multicoloured graph such that $G \subseteq G_1 \subseteq \G$ which satisfies \ref{itm:G-first-colours} and \ref{itm:G-last-colours} above and is maximal with respect to these properties.
			We claim that $G_1$ satisfies the following properties.
			\begin{enumerate} [label = \rm(G\arabic*)]
				\item \label{itm:G1-general-deg}
					$d_{G_1}(u) \ge d_{\G}(u) - \delta n$ for every vertex $u$ in $G_1$.
				\item \label{itm:G1-general-num-compts}
					There are at most $e(F)$ components of colour $\ell$ that have at least two and fewer than $\alpha_{\ell}n/2$ vertices. It follows that the number of $\ell$-coloured components that consist of more than one vertex is at most $\frac{|\G|}{\alpha n / 2 - 1} + e(F) \le \frac{s(\beta + \eps)n}{\alpha n / 3} + s^2 \le \frac{6s^2 \beta}{\alpha}$ (using $\beta \ge \alpha \ge \alpha_{\ell}$, $\eps$ being small, and $n$ being large).
				\item \label{itm:G1-general-struct-compts}
					Every $\ell$-coloured component $U$ is either the intersection of a complete blow-up of a star $S$ with $\G$, or the intersection of a complete bipartite graph with $\G$, where the latter can only hold if $\ell \in [k_0 + 1, k]$.
					In the former case, write $H(U)$ for the vertices in $U$ in the blow-up of the centre of the star $S$ and $T(U)$ for the remaining vertices. In the latter case, denote the bipartition of $U$ by $\{L(U), R(U)\}$ (for `left' and `right').
			\end{enumerate}
			Property \ref{itm:G1-general-deg} follows as $G \subseteq G_1 \subseteq \G$. To see \ref{itm:G1-general-num-compts}, suppose that there are more than $e(F)$ components of colour $\ell$ of order in $[2, \alpha n / 2)$. Since each such component contains an edge of $\G$ which corresponds to an edge of $F$, there exist two distinct $\ell$-coloured components $U$ and $W$ and an edge $xy$ in $F$ such that both $U$ and $W$ contain an edge between the blobs corresponding to $x$ and $y$ in $\G$; let $x_u y_u$ and $x_w y_w$ be such edges (where $x_u, x_w$ are in the blow-up of $x$). Now form $G_1'$ by adding an $\ell$-coloured edge between $x_u$ and $y_w$. Then $G_1 \subsetneq G \subseteq \G$, and $G_1'$ satisfies \ref{itm:G-first-colours} and \ref{itm:G-last-colours} above, contrary to the maximality of $G_1$. 

			For \ref{itm:G1-general-struct-compts}, let $U$ be an $\ell$-coloured component. Then either $U$ has no ($2$-)matching on at least $\alpha_{\ell} n$ vertices, or $U$ is bipartite and $\ell \in [k_0 + 1, k]$. If the former holds, let $H$ be a maximal graph that contains $U$, has the same vertex set, and has no ($2$-)matching on at least $\alpha_{\ell} n$ vertices. Then by \Cref{lem:ge} (for matchings) or \Cref{lem:ge-$2$} (for $2$-matchings), $H$ is a complete blow-up of a star. Property \ref{itm:G1-general-struct-compts} follows from the maximality of $G_1$. Similarly, if $U$ is bipartite, Property \ref{itm:G1-general-struct-compts} follows by considering a complete bipartite graph that contains $U$ and has the same vertex set as $U$.

			Let $\comp{G}_1$ be the graph on $V(G)$ with edges $E(\G) \setminus E(G_1)$. 
			\begin{claim} \label{claim:general-matching-complement}
				$\comp{G}_1$ does not have a matching of size larger than $\big( \frac{56s^3 \beta}{\alpha} \big)^{2k} \cdot \delta n$.
			\end{claim}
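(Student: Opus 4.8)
The plan is to mimic the proof of \Cref{claim:matching-complement} in the simplified version, with three modifications to handle the new features: the ground graph is a blow-up of $F$ rather than a complete graph, the components may be complete bipartite (for colours in $[k_0+1,k]$) rather than complete blow-ups of stars, and the number of relevant components is bounded by \ref{itm:G1-general-num-compts} rather than \ref{itm:G1-no-large-mono-cm}. As before, I would assign to each vertex $u$ of $G_1$ a type recording, for each colour $\ell$, which $\ell$-coloured component $U_\ell$ contains $u$ and the ``role'' of $u$ in $U_\ell$ --- for a star-type component this is whether $u \in H(U_\ell)$ or $u \in T(U_\ell)$; for a bipartite component it is whether $u \in L(U_\ell)$ or $u \in R(U_\ell)$; and additionally which blob $V_i$ of $\G$ contains $u$. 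By \ref{itm:G1-general-num-compts} the number of components of colour $\ell$ that are larger than one vertex is at most $6s^2\beta/\alpha$, so together with the singleton option and the two roles, the number of choices per colour is at most $2(6s^2\beta/\alpha + 1) \le 14s^2\beta/\alpha$, and the $s$ choices of blob give at most $t \le s \cdot (14s^2\beta/\alpha)^k$ types overall; one then checks $t^2 \le \big(\tfrac{56 s^3 \beta}{\alpha}\big)^{2k}$ (very crudely, since $s \le (14 s^2 \beta/\alpha)^{1/k} \cdot \text{(stuff)}$ can be absorbed into the base).

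Suppose for contradiction $\comp{G}_1$ has a matching $M$ with $|M| > \big(\tfrac{56 s^3\beta}{\alpha}\big)^{2k} \cdot \delta n \ge t^2 \delta n$. Then there are two types $\tau,\sigma$ and a submatching $M_0 \subseteq M$ with $|M_0| > |M|/t^2 \ge \delta n$ --- wait, I actually want a factor of $4^k$ of slack, so I would instead start from $|M| > 4^k t^2 \delta n$ and note this is still at most $\big(\tfrac{56 s^3\beta}{\alpha}\big)^{2k}\delta n$ after adjusting constants (since $4 \le 56s^3\beta/\alpha$ when the relevant parameters are in range; this is where one has to be slightly careful but it is a routine constant chase). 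Let $X_0, Y_0$ be the endpoints of $M_0$ of types $\tau,\sigma$ respectively. Since all vertices of $X_0$ lie in one blob $V_i$ and all of $Y_0$ in one blob $V_j$, and $M_0$ consists of non-edges of $G_1 \subseteq \G$, we must have $i=j$ and $F$ has a loop at $i$ --- otherwise $X_0 \cup Y_0$ would induce no edges of $\G$ at all and $\comp{G}_1$ containing these non-edges is consistent only if... actually if $ij \notin E(F)$ then these pairs are non-edges of $\G$ automatically, so they contribute nothing; but then in the final step we need edges of $\G$ between $X_k$ and $Y_k$, which requires $ij \in E(F)$ (as loop or edge). So I would simply observe that in order to reach the final contradiction with \ref{itm:G1-general-deg} we need many $\G$-edges between $X_0$ and $Y_0$, hence $ij \in E(F)$, and then proceed exactly as before.

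The core of the argument is then unchanged: I build $M_0 \supseteq M_1 \supseteq \cdots \supseteq M_k$ with $|M_\ell| \ge \tfrac14 |M_{\ell-1}|$ and no $\ell$-coloured edge between $X_\ell := X_0 \cap V(M_\ell)$ and $Y_\ell := Y_0 \cap V(M_\ell)$. For each $\ell$, if $U_\ell \ne W_\ell$ (distinct $\ell$-coloured components) take $M_\ell = M_{\ell-1}$. If $U_\ell = W_\ell =: U$ and $U$ is a star-type component, argue as before that both $\tau$ and $\sigma$ must have role $T$ in $U$ (else one side is $\ell$-joined to everything in $U$, contradicting that $X_0$--$Y_0$ is a non-edge matching), so $X_0,Y_0 \subseteq T(U)$ where $\ell$-edges form a disjoint union of cliques, and a uniformly random bipartition of the clique index set keeps each edge of $M_{\ell-1}$ with probability $1/4$. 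If $U$ is a complete bipartite component with parts $L(U),R(U)$, then again both sides cannot be $L$ (nor both $R$), since all $L$--$R$ pairs inside $U$ are $\ell$-edges; so one of $\tau,\sigma$ has role $L$ and the other $R$ --- but then every $x \in X_0$, $y \in Y_0$ with $x \in L(U), y \in R(U)$ is an $\ell$-edge, contradicting $M_0$ being a non-edge matching, so in fact this case cannot arise and $M_\ell = M_{\ell-1}$ works (or is vacuous). After step $k$, $|X_k| = |Y_k| = |M_k| \ge 4^{-k}|M_0| > \delta n$, and there is no $G_1$-edge between $X_k$ and $Y_k$; but $X_k \subseteq V_i$, $Y_k \subseteq V_j$ with $ij \in E(F)$, so every pair in $X_k \times Y_k$ is a $\G$-edge, giving some vertex of $X_k$ at least $|Y_k| > \delta n$ non-neighbours in $G_1$, contradicting \ref{itm:G1-general-deg}. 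The main obstacle is bookkeeping: making sure the bipartite case really is impossible (so it doesn't force a bad role configuration) and verifying the constant $\big(\tfrac{56 s^3\beta}{\alpha}\big)^{2k}$ genuinely dominates $4^k t^2$; neither is conceptually hard, but the blow-up structure means one must be disciplined about tracking blobs alongside components.
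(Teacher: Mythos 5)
Your proposal follows essentially the same route as the paper (types recording, for each colour, the component, the role within it, and the blob of $\G$; pigeonhole to a two-type submatching $M_0$; iterative refinement $M_0 \supseteq \cdots \supseteq M_k$ losing a factor $4$ per colour via a random bipartition of the cliques of $T(U)$; final contradiction with \ref{itm:G1-general-deg}), and the constant chase $4^k t^2 \le \big(\tfrac{56 s^3\beta}{\alpha}\big)^{2k}$ is fine. However, your treatment of the bipartite-component case has the dichotomy backwards, and as written that step is wrong. When $U_\ell = W_\ell = U$ is (the intersection with $\G$ of) a complete bipartite graph with parts $L(U), R(U)$, the configuration that is impossible is $\mu_\ell \neq \nu_\ell$: since $X_0$ is fully joined to $Y_0$ in $\G$, having one side in $L(U)$ and the other in $R(U)$ would make every edge of $M_0$ an $\ell$-coloured edge of $G_1$, contradicting that $M_0$ consists of non-edges. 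The configuration that can and does arise is $\mu_\ell = \nu_\ell \in \{L, R\}$; your reason for excluding it (``all $L$--$R$ pairs inside $U$ are $\ell$-edges'') says nothing about two vertices lying in the same part, so the conclusion ``this case cannot arise'' is false. In that surviving case one takes $M_\ell = M_{\ell-1}$, but this needs the (easy, one-line) justification you omit: by \ref{itm:G1-general-struct-compts} all $\ell$-edges of $U$ go between $L(U)$ and $R(U)$, so there are no $\ell$-coloured edges inside $L(U)$ or inside $R(U)$, hence none between $X_{\ell-1}$ and $Y_{\ell-1}$. This is exactly how the paper handles it.

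A smaller point: your hesitation about blobs is resolved by the definition used in the paper, where $\comp{G}_1$ has edge set $E(\G)\setminus E(G_1)$ rather than being the complement inside the complete graph. Then every edge of $M$ is an edge of $\G$, so the blobs $v, w$ of the two types satisfy $vw \in E(F)$ automatically (possibly a loop, possibly $v \neq w$ --- not only the loop case you first suggest), and $X_0$ is fully joined to $Y_0$ in $\G$ with no further argument; the phrasing ``we need many $\G$-edges, hence $ij \in E(F)$'' is circular and unnecessary once this definition is in place. With the bipartite case corrected and this definitional point made explicit, your argument coincides with the paper's proof.
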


			\begin{proof}
				For $\ell \in [k]$, let $I_{\ell}$ be the set of vertices in $G_1$ that are not incident with $\ell$-coloured edges.

				Define the \emph{type} $\tau(u)$ of a vertex $u$ to be $\tau(u) = (U_1, \ldots, U_k, \mu_1, \ldots, \mu_k, v)$, where $U_{\ell}$ and $\mu_{\ell}$ are defined as follows for $\ell \in [k]$
				\begin{itemize}
					\item
						If $u \in I_{\ell}$ then $U_{\ell} = I_{\ell}$. Otherwise, $U_{\ell}$ is the $\ell$-coloured component in $G_1$ that contains $u$.
					\item
						If $u \in I_{\ell}$ set $\mu_{\ell} = I$; otherwise take $\mu_{\ell}$ to be one of $H, T, L, R$ according to which of the four sets $H(U_{\ell})$, $T(U_{\ell})$, $L(U_{\ell})$, $R(U_{\ell})$ the vertex $u$ belongs to (note that only two of these sets are defined, in \ref{itm:G1-general-struct-compts}).
					\item
						The vertex $v$ is in $F$ and $u$ is in the blob in $\G$ corresponding to $v$.
				\end{itemize}
				Observe that the total number of types, denoted $t$, satisfies $t \le \left( \big( \frac{6s^2 \beta}{\alpha} + 1\big)^k\right) \cdot 4^k \cdot s \le  \big( \frac{28s^3\beta}{\alpha} \big)^{k}$ (crudely), using \ref{itm:G1-general-num-compts}. Thus, if $M$ is a matching in $\comp{G}_1$ of size larger than $\big( \frac{56s^3\beta}{\alpha} \big)^{2k}$, then there are two types, $\tau = (U_1, \ldots, U_k, \mu_1, \ldots, \mu_k, v)$ and $\sigma = (W_1, \ldots, W_k, \nu_1, \ldots, \nu_k, w)$, and a submatching $M_0 \subseteq M$, such that $|M_0| \ge |M|/t^2 > 4^k \delta n$ and the edges in $M_0$ have ends of types $\tau$ and $\sigma$. Denote by $X_0$ and $Y_0$ the vertices of type $\tau$ and $\sigma$, respectively, in $V(M_0)$.
				Note that since $M$ is a matching in $\G$, we have that $vw$ is an edge in $F$. It follows that $X_0$ is fully joined to $Y_0$ in $\G$. 

				We will find submatchings $M_0 \supseteq M_1 \supseteq \ldots \supseteq M_k$ such that $|M_{\ell}| > \frac{1}{4} \cdot |M_{\ell-1}|$ and there are no $\ell$-coloured edges between $X_{\ell} := X_0 \cap V(M_{\ell})$ and $Y_{\ell} := Y_0 \cap V(M_{\ell})$, for $\ell \in [k]$. Given such submatchings, the sets $X_k$ and $Y_k$ have size larger than $\delta n$ and there are no edges of $G_1$ between them. This is a contradiction to \ref{itm:G1-general-deg}, because, as explained above, $X_k$ is fully joined to $Y_k$ in $\G$.

				Suppose that $M_0 \supseteq M_1 \supseteq \ldots \supseteq M_{\ell-1}$ are as above and $\ell \in [k]$.
				We consider three cases: $U_{\ell} \neq W_{\ell}$; $U_{\ell} = W_{\ell} = I_{\ell}$; and $U_{\ell} = W_{\ell} \neq I_{\ell}$.
				In the first two cases there are no $\ell$-coloured edges between vertices of type $\tau$ and vertices of type $\sigma$; we can thus take $M_{\ell} = M_{\ell-1}$. 

				It remains to consider the third case; denote $U := U_{\ell} = W_{\ell}$. Then either $\mu_{\ell}, \nu_{\ell} \in \{H, T\}$, or $\mu_{\ell}, \nu_{\ell} \in \{L, R\}$. In fact, either $\mu_{\ell} = \nu_{\ell} = T$ or $\mu_{\ell} = \nu_{\ell} \in \{R, L\}$, because otherwise $X_0$ is fully joined to $Y_0$ in colour $\ell$, contradicting the assumption that there is a perfect matching of non-edges between $X_0$ and $Y_0$. If the latter holds, we can take $M_{\ell} = M_{\ell-1}$ because there are no $\ell$-coloured edges within $R(U)$ or $L(U)$, so suppose that the former holds. Recall that $T(U)$ is the intersection of a disjoint union of cliques with $\G$; denote the cliques by $K_1, \ldots, K_t$. Let $\{A, B\}$ be a random partition of $[t]$, and define
				\begin{equation*}
					M_{\ell} = \left\{ xy \in M_{\ell-1} : \,\, x \in X_{\ell-1} \cap \bigcup_{i \in A} K_i \text{ and } y \in Y_{\ell-1} \cap \bigcup_{i \in B} K_i \right\}.
				\end{equation*}
				It is easy to see that $\Ex[|M_{\ell}|] = \frac{1}{4} \cdot |M_{\ell-1}|$, and so there exists $A, B$ such that $|M_{\ell}| \ge \frac{1}{4} \cdot |M_{\ell-1}|$. The matching $M_{\ell}$ satisfies the requirements.
			\end{proof}

			Let $M$ be a maximum matching in $\comp{G}_1$. By \Cref{claim:general-matching-complement}, we have $|M| \le \big( \frac{56s^3 \beta}{\alpha} \big)^{2k} \cdot \delta n \le \eps n / 2$. Let $G_2 = G_1 \setminus V(M)$. Then $G_2$ is a $k$-coloured graph satisfying \ref{itm:G-first-colours} and \ref{itm:G-last-colours} that contains $\G' = F(N_1, \ldots, N_s)$ as a subgraph. 
			\Cref{thm:matching-general} follows by taking $G'$ to be any copy of $\G'$ in $G_2$. 
		\end{proof}

\section{Three colours} \label{sec:three-colours}

	As mention in the introduction, the $3$-colour Ramsey number of $P_n$ is known for sufficiently large $n$ (see Gy\'arf\'as, Ruszink\'o, S\'ark\"ozy and Szemer\'edi \cite{gyarfas-et-al} for the exact result and Figaj and {\L}uczak \cite{figaj-luczak} for an asymptotic result). In particular, it is known that $r_3(P_n) = (2 + o(1))n$. In this section we give an alternative proof of the upper bound $r_3(P_n) \le (2 + o(1))n$, as well as a sketch of a proof of a corresponding stability result. Our proof is arguably simpler than the proofs in \cite{figaj-luczak,gyarfas-et-al}, and uses a different method, namely induction. The main step in our proof is covered by the following lemma. Throughout this section, we use the notation $\cm(m)$ to denote a connected matching of size $m$.

	\begin{lem} \label{lem:three-colours}
		Let $G$ be a $3$-coloured $K_{4m+1}$. If $G$ does not have a monochromatic $\cm(m+1)$, then the following holds, up to relabelling of the colours. 
		\begin{enumerate} [label = \rm(C)]
			\item \label{itm:three-extremal}
				there is a partition $\{X, Y, Z, W\}$ of $V(G)$ such that $|X| = |Y| = m$; $|Z| \le m$; the edges in $[X, Z]$ and $[Y, W]$ are red; the edges in $[X, W]$ and $[Y, Z]$ are blue; the edges in $[Z, W]$ are green; if $|W| \ge m + 2$, then all edges in $W$ are green; and if $Z \neq \emptyset$, then the edges in $[X, Y]$ are also green.
		\end{enumerate}
	\end{lem}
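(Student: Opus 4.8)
The plan is to prove \Cref{lem:three-colours} by induction on $m$, using the connected-matching reduction philosophy of the rest of the paper in the following way: I want to locate a vertex (or pair of vertices) whose removal leaves a $3$-colouring of $K_{4(m-1)+1}$ still avoiding a monochromatic $\cm(m)$, apply the induction hypothesis to obtain the extremal partition $\{X', Y', Z', W'\}$ on the smaller graph, and then analyse where the deleted vertices attach. The base case $m = 0$ (or a small constant) is trivial since $V(G) = \emptyset$ and the partition is empty. For the inductive step, first I would fix a colour, say red, and look at the red components. If some red component $R$ contains a $\cm(m)$ — but not a $\cm(m+1)$, by hypothesis — then \Cref{cor:ge-remove-vx} applies: since $R$ is connected on at least $2m+2$ vertices (if it is smaller we argue directly that red matchings are too small to matter), there is a vertex $u \in R$ with $R \setminus \{u\}$, hence $G \setminus \{u\}$ in red, free of $\cm(m)$. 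But we also need $G \setminus \{u\}$ to avoid green and blue $\cm(m)$, which is not automatic, so the real first step is a case analysis on how close each colour is to containing a $\cm(m)$.

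The key dichotomy I would set up: either \emph{every} colour class, restricted to its largest monochromatic component, has matching number exactly $m$ (the ``tight'' case), or some colour is slack. In the slack case, deleting a suitable pair of vertices using degree/counting arguments should drop us to $K_{4(m-1)+1}$ with no monochromatic $\cm(m)$ while preserving slackness enough to invoke induction; the extremal structure on the smaller graph then pulls back by a direct check of the constant number of ways two vertices can be inserted into the four parts $X', Y', Z', W'$ without creating a longer connected matching. In the tight case — which I expect to be the genuine obstacle — all three colours simultaneously have a component of matching number exactly $m$ in a graph on $4m+1$ vertices, and here I would use the Gallai–Edmonds structure (\Cref{lem:ge}: a maximal $\cm(m+1)$-free connected graph is a complete blow-up of a star) to pin down each colour's large component as a blow-up of a star with head $H(\cdot)$ and tail $T(\cdot)$, then argue the tails must be large (since the head is a ``universal'' part and a large head would force a $\cm(m+1)$ against a big tail), and finally show the three tails, each an independent set in its own colour but a clique across the other two colours, interlock to force exactly the partition described in \ref{itm:three-extremal}. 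The conditions ``if $|W| \ge m+2$ then all of $W$ is green'' and ``if $Z \neq \emptyset$ then $[X,Y]$ is green'' are exactly the degenerate sub-cases of this blow-up analysis — e.g.\ when some tail is forced to be monochromatic because its vertices have no room left for another colour without exceeding matching number $m$.

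Concretely, the order of steps would be: (1) dispose of the base case and set up induction on $m$; (2) define, for each colour $c$, the quantity $\nu_c = $ the maximum matching number over $c$-coloured components, and split into the slack case ($\nu_c \le m - 1$ for some $c$) and the tight case ($\nu_c = m$ for all $c$); (3) in the slack case, use \Cref{cor:ge-remove-vx} together with a counting argument to find two vertices whose deletion preserves ``$3$-coloured, $4(m-1)+1$ vertices, no monochromatic $\cm(m)$'', apply induction, and re-insert — a finite check; (4) in the tight case, apply \Cref{lem:ge} to the maximal monochromatic-$\cm(m+1)$-free supergraph of each large colour component to get three blow-ups of stars, show the three tails are disjoint (or nearly so) and large, and deduce they are among $\{X, Y, Z, W\}$; (5) clean up the small ``head'' and leftover vertices and verify the edge-colour constraints between the four parts, including the two conditional clauses. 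The main obstacle is step (4): coordinating three Gallai–Edmonds star decompositions at once on a graph that is exactly critical in every colour, and ruling out configurations other than the claimed one (in particular showing heads are small and tails partition $V(G)$ up to the part $Z$ of size possibly less than $m$). I would expect this to require a careful but elementary double-counting of the $4m+1$ vertices against the three matching-number bounds, of the same flavour as the proof of \Cref{lem:ge} itself.
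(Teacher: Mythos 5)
Your outline shares the induction-on-$m$ skeleton and the tools (\Cref{lem:ge}, \Cref{cor:ge-remove-vx}) with the paper's proof, but the two steps you defer are exactly the substance of the lemma, and your case split does not set them up correctly. The paper's dichotomy is not ``slack vs.\ tight'' but: either some bipartition $\{A,B\}$ with $|A|=2m$, $|B|=2m+1$ has no green (say) edge across it, in which case the extremal partition in \ref{itm:three-extremal} is extracted \emph{directly} from a separate $2$-colour bipartite lemma (\Cref{lem:bip}, a K\H{o}nig-type argument), with no induction and no pulling back of structure; or no bipartition avoids any colour, which forces each colour to have at most one component containing a matching of size $m$, and that component to have order at least $2m+2$ --- precisely the hypotheses of \Cref{cor:ge-remove-vx}. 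Only then does one get a set $U$ of at most three vertices with no monochromatic $\cm(m)$ in $G\setminus U$, and the induction hypothesis (applied after deleting one further vertex $v$, four deletions in total, not two --- note $4m+1-4=4(m-1)+1$) is used purely to reach a contradiction via the colour of an edge $vw$ with $w\in W$: the extremal structure is never ``re-inserted into''. Your slack case has a genuine gap here: you assert that suitable vertices to delete exist ``by degree/counting'', but in your slack case a non-slack colour may have a component of order exactly $2m$ or $2m+1$ with a matching of size $m$ (e.g.\ a monochromatic $K_{2m}$), where no single vertex deletion destroys all matchings of size $m$ and \Cref{cor:ge-remove-vx} does not apply; the paper's bipartition dichotomy is engineered exactly to exclude this situation, and your dichotomy does not.

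The tight case, which you correctly identify as the main obstacle, is where all the extremal colourings of \ref{itm:three-extremal} actually live, so it cannot end in a contradiction and must produce the structure --- and your plan of coordinating ``three'' Gallai--Edmonds star decompositions is both unexecuted and misconfigured: in the extremal colouring a colour can have \emph{two} components of matching number $m$ (red spans $[X,Z]$ and $[Y,W]$, blue spans $[X,W]$ and $[Y,Z]$), so there may be up to six relevant components, and the maximal $\cm(m+1)$-free supergraph given by \Cref{lem:ge} constrains only that supergraph, not the colour class itself (a complete bipartite red component such as $[X,Z]$ is not a complete blow-up of a star; only its maximal supergraph is), so the ``tails interlock'' argument is not a routine double count. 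In short, the proposal is a plausible research plan but not a proof: the existence of the deleting set in the slack case is unjustified and in general false under your case hypotheses, and the structural derivation in the tight case --- which the paper obtains much more cheaply from \Cref{lem:bip} --- is missing.
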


	The above lemma implies the required upper bound on $r_3(P_n)$ using \Cref{cor:main-complete-symmetric}.

	\begin{proof}[Alternative proof of $r_3(P_n) \le (2 + o(1))n$]
		By \Cref{lem:three-colours}, every $3$-colouring of $K_{4m+1}$ contains a monochromatic $\cm(m)$ for $m \ge 1$ (indeed, either there is a monochromatic $\cm(m+1)$ or \ref{itm:three-extremal} holds, in which case there is a blue $\cm(m)$ in $G[X, W]$). In particular, for any $\eps > 0$ and sufficiently large $n$, every $3$-colouring of $K_{\ceil{(1 + \eps)2n}}$ contains a monochromatic connected matching on at least $n$ vertices. \Cref{cor:main-complete-symmetric} implies that $r_3(P_n) \le (2 + o(1))n$.
	\end{proof}

	To prepare for the proof of \Cref{lem:three-colours}, we need the following lemma regarding $2$-colourings of complete bipartite graphs.

	\begin{lem} \label{lem:bip}
		Let $m \ge 1$ and let $G$ be a complete bipartite graph with bipartition $\{A, B\}$ such that $|A| = 2m$ and $|B| = 2m+1$. Then for every $2$-colouring of $G$, one of the following holds.
		\begin{enumerate}[label = \rm(B\arabic*)]
			\item \label{itm:bip-cm}
				there is a monochromatic $\cm(m+1)$,
			\item \label{itm:bip-extremal}
				there are partitions $\{X, Y\}$ of $A$ and $\{Z, W\}$ of $B$ such that: $|X| = |Y| = m$; all $[X, Z]$ and $[Y, W]$ edges are red; and all $[X, W]$ and $[Y, Z]$ edges are blue. Note that one of $Z$ and $W$ might be empty.
		\end{enumerate}
	\end{lem}

	\begin{proof}
		Consider a red-blue colouring of $G$ with no monochromatic $\cm(m+1)$.

		Suppose first that there is a red component that contains all red edges. Then any maximum red matching has size at most $m$ (by assumption on $G$), so by K\"onig's theorem there is a set $U$ of size at most $m$ that covers all red edges. In particular, all $[A \setminus U, B \setminus U]$ edges are blue. It follows that $U \subseteq A$, because otherwise $|A \setminus U|, |B \setminus U| \ge m+1$ and $[A \setminus U, B \setminus U]$ contains a blue $\cm(m+1)$. 
		Take $X = U$, $Y = A \setminus U$, $Z = B$ and $W = \emptyset$ to see that \ref{itm:bip-extremal} holds.

		Now suppose that there is no red component that contains all red edges, nor a blue component that contains all blue edges. It follows that there are partitions $\{X, Y\}$ of $A$ and $\{Z, W\}$ of $B$, such that $X, Y, Z, W \neq \emptyset$ and all $[X, W]$ and $[Y, Z]$ edges are blue. By the assumption on the blue edges, all $[X, Z]$ and $[Y, W]$ edges are red. Without loss of generality $|X| \ge m$ and $|W| \ge m+1$. By assumption on $G$, we have $|X| = m$, as required for \ref{itm:bip-extremal}.
	\end{proof}

	Finally, here is the proof of \Cref{lem:three-colours}.

	\begin{proof}[Proof of \Cref{lem:three-colours}]
		We prove the lemma by induction on $m$. When $m = 0$, the statement is trivial: take $X = Y = Z = \emptyset$ and $W = V(G)$, to see that \ref{itm:three-extremal} holds.

		Now let $m \ge 1$ and suppose that the statement holds for $m-1$. Let $G$ be a $3$-colouring of $K_{4m+1}$ with no monochromatic $\cm(m+1)$. 
		Suppose that $\{A, B\}$ is a partition of $V(G)$ such that $|A| = 2m$, $|B| = 2m+1$, and all $[A, B]$ edges are not green. By \Cref{lem:bip}, there are partitions $\{X, Y\}$ of $A$ and $\{Z, W\}$ of $B$ satisfying \ref{itm:bip-extremal}. Without loss of generality, $|Z| \le m$. 
		It is easy to verify, using that $G$ does not have a monochromatic $\cm(m+1)$, that all $[Z, W]$ edges are green; if $|W| \ge m+2$ then all edges in $W$ are green; and if $Z \neq \emptyset$ then all $[X, Y]$ edges are green. It follows that property \ref{itm:three-extremal} holds.

		We may now assume that there is no partition $\{A, B\}$ of $V(G)$ such that the $[A, B]$ edges avoid one of the three colours. As $|G| = 4m+1$, this implies that there is a most one monochromatic component in each colour that contains a matching of size $m$, and every such component has order at least $2m+2$. By \Cref{cor:ge-remove-vx}, applied to each such component, there is a set $U$ of three vertices such that $G \setminus U$ has no monochromatic $\cm(m)$. This is a contradiction if $m = 1$ (because $|G \setminus U| \ge 2$, and so $G \setminus U$ contains a monochromatic $\cm(m)$). If $m \ge 2$, take $v \in V(G) \setminus U$, then by induction applied to $G' = G \setminus (U \cup \{v\})$, there is a partition $\{X, Y, Z, W\}$ of $V(G')$ as in \ref{itm:three-extremal} (with $m-1$). Let $w \in W$. It is easy to check that if $vw$ has colour $\ell$, there is an $\ell$-coloured $\cm(m)$ in $G \setminus U$, contradicting the choice of $U$.
	\end{proof}

	One can use similar ideas to prove a stability result for $3$-coloured complete graphs that do not contain a large monochromatic connected matching; we sketch a proof of such a result below. Such a result implies a stability result for monochromatic connected matchings in $3$-coloured almost complete graphs via \Cref{thm:matching-complete}, which in turn implies a similar stability result for paths in $3$-coloured complete graphs.

	\begin{proof}[Sketch of proof of a stability result for $3$-coloured complete graphs]
		We sketch a proof of the following statement: if $G$ is a $3$-coloured $K_{4m}$ that does not contain a monochromatic $\cm((1+\eps)m)$, then, up to relabelling of the colours, there is a partition $\{X, Y, Z, W\}$ of $V(G)$ such that $|X| = |Y| = m$; almost all $[X, Z]$ and $[Y, W]$ edges are red; and almost all $[X, W]$ and $[Y, W]$ edges are blue. Here $\eps > 0$ is small and $m$ is large.

		Given such $G$, if there are disjoint sets $A$ and $B$ of size at least $(2 - 8\eps)m$ such that there are no green $[A, B]$ edges, then we can apply similar arguments as in the proof of \Cref{lem:bip} to find the required partition $\{X, Y, Z, W\}$. Now suppose that there exist no disjoint $A$ and $B$ of size at least $(2 - 8\eps)m$ such that the $[A, B]$ edges avoid at least one colour. It follows that there is at most one component in each colour that contains a matching of size at least $(1 - 4\eps)m$. By \Cref{cor:ge-remove-vx} and the assumption that $G$ does not contain a monochromatic $\cm((1+\eps)m)$, there is a set $U$ of size at most $15\eps m$ such that $G \setminus U$ does not contain a monochromatic $\cm((1 - 4\eps)m)$. 
		So $G \setminus U$ is a graph on at least $(4 - 15\eps)m \ge 4(1 - 4\eps)m + 1$ vertices which does not have a monochromatic $\cm((1 - 4\eps)m)$, contradicting \Cref{lem:three-colours}.
	\end{proof}

\subsection*{Acknowledgements}

	I would like to thank Louis DeBiasio for his helpful suggestions regarding an earlier draft of this paper. I would also like to thank the anonymous referee for their helpful comments.

\bibliography{matchings}
\bibliographystyle{amsplain}

\end{document}